\numberwithin{equation}{section}
\newtheorem{Theorem}{Theorem}[section]
\newtheorem{Proposition}[Theorem]{Proposition}
 { \theoremstyle{definition}
\newtheorem{Remark}[Theorem]{Remark} }
\begin{document}

\allowdisplaybreaks

\newcommand{\arXivNumber}{1809.07421}

\renewcommand{\thefootnote}{}

\renewcommand{\PaperNumber}{007}

\FirstPageHeading

\ShortArticleName{Supersingular Elliptic Curves and Moonshine}

\ArticleName{Supersingular Elliptic Curves and Moonshine\footnote{This paper is a~contribution to the Special Issue on Moonshine and String Theory. The full collection is available at \href{https://www.emis.de/journals/SIGMA/moonshine.html}{https://www.emis.de/journals/SIGMA/moonshine.html}}}

\Author{Victor Manuel ARICHETA~$^{\dag\ddag}$}

\AuthorNameForHeading{V.M.~Aricheta}

\Address{$^\dag$~Department of Mathematics, Emory University, Atlanta, GA 30322, USA}
\EmailD{\href{mailto:variche@emory.edu}{variche@emory.edu}}
\Address{$^\ddag$~Institute of Mathematics, University of the Philippines,\\
\hphantom{$^\ddag$}~Diliman 1101, Quezon City, Philippines}
\EmailD{\href{mailto:vmaricheta@math.upd.edu.ph}{vmaricheta@math.upd.edu.ph}}

\ArticleDates{Received September 30, 2018, in final form January 19, 2019; Published online January 29, 2019}

\Abstract{We generalize a theorem of Ogg on supersingular $j$-invariants to supersingular elliptic curves with level. Ogg observed that the level one case yields a characterization of the primes dividing the order of the monster. We show that the corresponding analyses for higher levels give analogous characterizations of the primes dividing the orders of other sporadic simple groups (e.g., baby monster, Fischer's largest group). This situates Ogg's theorem in a broader setting. More generally, we characterize, in terms of supersingular elliptic curves with level, the primes arising as orders of Fricke elements in centralizer subgroups of the monster. We also present a connection between supersingular elliptic curves and umbral moonshine. Finally, we present a procedure for explicitly computing invariants of supersingular elliptic curves with level structure.}

\Keywords{moonshine; modular curves; supersingular elliptic curves; supersingular polynomials}

\Classification{14H52; 11F06; 11F11; 11F22; 11F37; 20D08}

\renewcommand{\thefootnote}{\arabic{footnote}}
\setcounter{footnote}{0}

\section{Introduction and results}\label{S1}

\emph{Moonshine} refers to unexpected connections between disparate areas of mathematics (e.g., modular objects, sporadic groups) and physics (e.g., 2-dimensional conformal field theories). This paper deals with a theorem of Ogg and his subsequent observation, which may be regarded as the earliest occurrence of moonshine. In this paper we show that these constitute the first case of a broader phenomenon.

In his 1975 inaugural lecture at the Coll\`ege de France, Tits mentioned that the (then conjectural) \emph{monster} group $\mathbb{M}$, if it exists, is a simple sporadic group of order
\begin{gather*}
|\mathbb{M}| = 2^{46} \cdot 3^{20} \cdot 5^9 \cdot 7^6 \cdot 11^2 \cdot 13^3 \cdot 17 \cdot 19 \cdot 23 \cdot 29 \cdot 31 \cdot 41 \cdot 47 \cdot 59 \cdot 71.
\end{gather*}
Ogg, who was in the audience, recognized that the primes in this factorization are precisely the ones he had recently obtained geometrically from his work on supersingular $j$-invariants \cite{ogg1974automorphismes}. In his work he showed that the following statements O1 and O2 are equivalent.
\begin{enumerate}\itemsep=0pt
\item[(O1)] All the supersingular $j$-invariants in characteristic $p$ are in the prime field $\mathbb{F}_p$.
\item[(O2)] The genus of the modular curve $X_0^+(p)$ is zero.
\end{enumerate}
Here $X_0^+(p)$ is the quotient of $X_0(p)$ by its Fricke involution $w_p$. (See Section~\ref{S2.1} for definitions.) We refer to this equivalence in this paper as \emph{Ogg's theorem}. \emph{Ogg's observation} is that the primes satisfying either O1 or O2 are the ones dividing the order of the monster.\footnote{For a short note on Ogg's observation, see \cite{duncan2016jack}.}

A few years later, in 1979, Conway and Norton~-- encouraged by observations of McKay and Thompson~\cite{thompson1979some}~-- published their \emph{monstrous moonshine} conjecture~\cite{conway1979monstrous}. They postulated the existence of an infinite-dimensional representation of the monster, with a $\mathbb{Z}$-grading, whose graded dimension function is the modular $j$-invariant. Moreover, they gave precise predictions for the other graded trace functions. Similar to the $j$-function, the other graded trace functions were expected to be \emph{principal moduli} for genus zero quotients of the upper half-plane. A candidate for such a representation of the monster, called the \emph{moonshine module}, was constructed by Frenkel, Lepowsky and Meurman in 1984 \cite{frenkel1984natural}. Borcherds showed in 1992 that the graded trace functions of the moonshine module are indeed the principal moduli predicted by Conway and Norton, thereby proving the monstrous moonshine conjecture~\cite{borcherds1992monstrous}.

Ogg's observation is partly explained by monstrous moonshine. For each prime divisor $p$ of the order of the monster, there is an element of the monster whose graded trace function is the principal modulus for $X_0^+(p)$. The genus of $X_0^+(p)$ is then forced to be zero. In other words, the primes dividing the order of the monster are necessarily included in Ogg's list of primes, i.e, the set of primes satisfying either of the equivalent statements O1 and O2. This however does not explain why Ogg's list does not contain more primes. In this paper we show that both Ogg's theorem and Ogg's observation generalize naturally. In so doing we provide further evidence that Ogg's observation is more than just a coincidence.

It is useful, for purposes of generalization, to state Ogg's theorem in terms of modular curves. The modular curve $X_0(1)$, whose non-cuspidal points parametrize isomorphism classes of elliptic curves, has good reduction modulo any prime $p$ (cf.\ Section~\ref{S2.1}). The supersingular points of~$X_0(1)$ modulo $p$ correspond to supersingular elliptic curves in characteristic~$p$, and a supersingular point is defined over the prime field $\mathbb{F}_p$ if and only if the $j$-invariant of the corresponding elliptic curve is in~$\mathbb{F}_p$. Thus, we can restate Ogg's theorem as follows: the supersingular points of~$X_0(1)$ modulo $p$ are all defined over $\mathbb{F}_p$ if and only if the genus of $X_0^+(p)$ is zero.

The main idea of this paper is to consider elliptic curves with level structure by repla\-cing~$X_0(1)$ by more general modular curves $X$. We give a characterization for the primes $p$ that have the \emph{rationality property for~$X$}, by which we mean that all the supersingular points of~$X$ modulo~$p$ are defined over~$\mathbb{F}_p$. This analysis yields several consequences. First, as mentioned earlier, we find that Ogg's theorem generalizes naturally (Theorem~\ref{T1}). Second, we discover that Ogg's observation also generalizes naturally (Theorem~\ref{T2}). Third, we find that the primes that have the rationality property detect the existence of \emph{mock modular forms} with nonzero \emph{shadows} in \emph{umbral moonshine} (Theorem~\ref{T3}). (See Sections~\ref{S2.2} and~\ref{S2.3} for definitions.)

In this paper, we consider the quotient of $X_0(N)$ by Atkin--Lehner involutions $w_e, w_f, \ldots$ which we denote by $X_0(N){+}e,f,\ldots$. The non-cuspidal points of these modular curves represent isomorphism classes of elliptic curves with level structure. By a theorem of Igusa, these modular curves have good reductions modulo primes $p$ not dividing $N$. Let $X = X_0(N){+}e,f,\ldots$. Denote by: $Q_p(X)$ the number of supersingular points of $X$ modulo~$p$~-- or equivalently the number of isomorphism classes of supersingular elliptic curves with level structure in characteristic $p$~-- that are not defined over $\mathbb{F}_p$; $\operatorname{\textsl{genus}}(X)$ the genus of $X$; and $X^p$ the modular curve $X_0(Np){+}p,e,f,\ldots$ obtained by taking the quotient of $X_0(Np)$ by Atkin--Lehner involutions $w_p, w_e, w_f, \ldots$ where $\{ p, e, f, \ldots \}$ is understood to be the set $\{e,f,\ldots,p,pe,pf,\ldots \}$. Our first theorem gives a characterization for the primes that have the rationality property for these modular curves.

\begin{Theorem}[Theorem~\ref{theorem1}]\label{T1}Let $N$ be a positive integer, let $e,f,\ldots$ be exact divisors of $N$, and let $X = X_0(N){+}e,f,\ldots$. If $p$ is a prime not dividing $N$, then
\begin{gather*} \dfrac{1}{2}Q_p(X) = \operatorname{\textsl{genus}}(X^p) - \operatorname{\textsl{genus}}(X).\end{gather*} Consequently, $p$ has the rationality property for $X$ if and only if the modular curves $X$ and $X^p$ have the same genus.
\end{Theorem}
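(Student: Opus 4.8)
\emph{Strategy.} The plan is to compute $\operatorname{\textsl{genus}}(X)$ and $\operatorname{\textsl{genus}}(X^p)$ as arithmetic genera of the special fibres at $p$ of suitable proper flat models over $\mathbb{Z}_{(p)}$, using that the arithmetic genus is locally constant in a proper flat family. Since $p\nmid N$, a theorem of Igusa gives $X$ good reduction at $p$, so $\operatorname{\textsl{genus}}(X)=\operatorname{\textsl{genus}}(X_{\mathbb{F}_p})$ with $X_{\mathbb{F}_p}$ smooth. For $X^p$ I would take the Deligne--Rapoport model of $X_0(Np)$ over $\mathbb{Z}_{(p)}$ and then divide by the group $W^p=\langle w_p,w_e,w_f,\ldots\rangle$ of Atkin--Lehner involutions; the quotient is proper and flat over $\mathbb{Z}_{(p)}$ with generic fibre $X^p$, and its mod-$p$ fibre can be described explicitly. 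For $N=1$ with only $w_p$ this is exactly Ogg's argument.

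\emph{The special fibre and the Atkin--Lehner action.} The key input is the Deligne--Rapoport theorem: the fibre of $X_0(Np)$ over $\mathbb{F}_p$ is a union $C_1\cup C_2$ of two copies of $X_0(N)_{\mathbb{F}_p}$ crossing transversally, with one ordinary double point above each supersingular point of $X_0(N)_{\mathbb{F}_p}$; \'etale-locally around such a point the model has the form $A[[u,v]]/(uv-p)$ for a complete discrete valuation ring $A$ over $\mathbb{Z}_p$. On this fibre the involutions $w_e,w_f,\ldots$ are prime to $p$: they preserve each $C_i$ and act there as the usual Atkin--Lehner involutions, so $C_i/\langle w_e,w_f,\ldots\rangle\cong X_{\mathbb{F}_p}$. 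By contrast $w_p$ -- and each product $w_pw_e,\ldots$ -- interchanges $C_1$ and $C_2$, swapping the two branches $u\leftrightarrow v$ (up to units) at each double point; on supersingular points it acts by $(E,C_N)\mapsto(E^{(p)},C_N^{(p)})$, the $p$-power Frobenius twist, since on the prime-to-$p$ torsion the relative Frobenius $F_E$ realises exactly this twist. In particular a supersingular point is fixed by $w_p$ exactly when it is defined over $\mathbb{F}_p$, and since all supersingular points are $\mathbb{F}_{p^2}$-rational, $w_p$ acts as an involution on them, so $Q_p(X)$ is even.

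\emph{Taking the quotient.} Dividing the special fibre by $W^p$ in two stages -- first by $\langle w_e,w_f,\ldots\rangle$, which collapses each $C_i$ to a copy of $X_{\mathbb{F}_p}$ meeting the other in the supersingular points of $X$, and then by the residual $\langle w_p\rangle$, which folds the two copies together -- gives an irreducible curve with normalisation $X_{\mathbb{F}_p}$; this is $(X^p)_{\mathbb{F}_p}$, and it remains to count its nodes. At a supersingular point of $X$ fixed by the residual Frobenius (i.e.\ defined over $\mathbb{F}_p$) the local model of the quotient is the invariant subring of $A[[u,v]]/(uv-p)$ under $u\leftrightarrow v$, namely $A[[u+v]]$, which is regular with smooth special fibre, so no node results. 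The remaining $Q_p(X)$ supersingular points of $X$ fall into $\tfrac12 Q_p(X)$ Frobenius-orbits of size two, and each orbit maps to a single ordinary double point of $(X^p)_{\mathbb{F}_p}$, with local model again $A[[u,v]]/(uv-p)$. Hence $(X^p)_{\mathbb{F}_p}$ is reduced with exactly $\tfrac12 Q_p(X)$ nodes on a curve of geometric genus $\operatorname{\textsl{genus}}(X_{\mathbb{F}_p})$, so
\[
\operatorname{\textsl{genus}}(X^p)=\operatorname{\textsl{genus}}\big(X_{\mathbb{F}_p}\big)+\tfrac12 Q_p(X)=\operatorname{\textsl{genus}}(X)+\tfrac12 Q_p(X).
\]
Rearranging gives the asserted identity, and the final assertion follows at once: $p$ has the rationality property for $X$ means $Q_p(X)=0$, which the identity turns into $\operatorname{\textsl{genus}}(X^p)=\operatorname{\textsl{genus}}(X)$.

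\emph{The main obstacle.} The conceptual skeleton above is clean, but the genuinely delicate point is the local analysis at supersingular points carrying extra automorphisms -- those with $j\in\{0,1728\}$, which occur for small $p$ with $p\equiv2\pmod3$ or $p\equiv3\pmod4$. There the Deligne--Rapoport model is not regular and the crossing is not a plain node, so one must either resolve to a regular model and recompute intersection numbers, carry the automorphism weights through as in the Eichler--Deuring mass formula, or dispose of the finitely many offending small primes by direct inspection. One should also confirm that forming the $W^p$-quotient commutes with reduction mod $p$, which is routine because $|W^p|$ is a power of $2$, with the prime $p=2$ (forcing $N$ odd) treated separately.
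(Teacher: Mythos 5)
Your proposal is correct and follows essentially the same route as the paper: both start from the Deligne--Rapoport picture of the special fibre as two copies crossing at the supersingular points, pass to the quotient by the Atkin--Lehner involutions (the paper simply cites Furumoto--Hasegawa for the intermediate model of $X_0(Np){+}e,f,\ldots$ modulo $p$ that you construct by hand), and then apply Ogg's argument that $w_p$ folds the two components together, creating one node per Frobenius-conjugate pair of supersingular points not defined over $\mathbb{F}_p$. Your closing remarks on the local structure at points with extra automorphisms and on commuting the quotient with reduction are careful additions that the paper (following Ogg) leaves implicit, but they do not change the argument.
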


Theorem \ref{T1} naturally generalizes Ogg's theorem. Indeed, by letting $X = X_0(1)$, so that $\operatorname{\textsl{genus}}(X) = 0$ and $X^p = X_0^+(p)$, this theorem says that $p$ has the rationality property for $X_0(1)$ if and only if the genus of $X_0^+(p)$ is zero, which is precisely Ogg's theorem.

We now discuss a generalization of Ogg's observation. For this, we consider modular curves $X$ of the form $X_0(N){+}e,f,\ldots$ with genus equal to zero. All such modular curves except three (i.e., $X_0(25)$, $X_0(49){+}49$, $X_0(50){+}50$) arise in monstrous moonshine. By Theorem \ref{T1}, identifying the primes that have the rationality property for $X$ reduces to determining the complete list of primes $p$ such that $X^p$ has genus zero. We obtain the complete list of primes that have the rationality property for all such $X$, and we collect this information in a table in Appendix~\ref{AA}. Note from this table that there are no primes that have the rationality property for the three non-monstrous modular curves.

\begin{table}\centering
\caption{Primes that have the rationality property for low levels.}\vspace{1mm}
\label{Tab1}
\begin{tabular}{|c|c|c|}
\hline
$X$ & primes $p$ that have the rationality property for $X$ \\
\hline \hline
$X_0^+(2)$ & 3, 5, 7, 11, 13, 17, 19, 23, 31, 47 \bsep{1pt}\tsep{1pt}\\
\hline
$X_0(2)$ & 3, 5, 7, 11, 23 \\
\hline
$X_0^+(3)$ & 2, 5, 7, 11, 13, 17, 23, 29 \bsep{1pt}\tsep{1pt} \\
\hline
$X_0(3)$ & 2, 5, 11 \\
\hline
\end{tabular}
\end{table}

In Table \ref{Tab1}, we list the primes that have the rationality property for some modular curves. Notice that the primes on the first row~-- the primes that have the rationality property for~$X_0^+(2)$~-- are exactly the odd primes dividing the order of the \emph{baby monster} sporadic group. Similarly, the primes in the third row are the primes, not equal to~$3$, that divide the order of the largest \emph{Fischer group}, another sporadic group. Thus, the primes that have the rationality property for modular curves of higher level characterize the primes dividing the orders of other sporadic groups; these are natural generalizations of Ogg's observation.

\begin{Remark}These generalizations of Ogg's observation to the baby monster group and the largest Fischer group have been found, recently and independently, by Nakaya using analytic methods (e.g., class number estimates)~\cite{nakaya2018number}. He conjectured generalizations of Ogg's observation to the Harada--Norton and Held sporadic groups. Theorem~\ref{T1} (cf.\ entries for $X_0(5){+}$ and $X_0(7){+}$ in Appendix~\ref{AA}) confirms this conjecture.
\end{Remark}

In fact, we generalize Ogg's observation to all \emph{monstrous modular curves}~-- i.e.\ modular curves arising in monstrous moonshine~-- of the form $X_0(N){+}e,f,\ldots$ as follows. Under the correspondence given by monstrous moonshine, if $X \neq X_0(27){+}27$, then there is a unique conjugacy class of the monster whose graded trace function is the principal modulus for $X$, and we denote this conjugacy class by $C(X)$. If $X = X_0(27){+}27$, then there are exactly two conjugacy classes of the monster whose graded trace functions are both equal to the principal modulus for $X$. We let $C(X_0(27){+}27)$ be the smaller of these conjugacy classes; this class is labelled 27A in the ATLAS~\cite{conway1985atlas}. Finally, by a Fricke element of the monster of prime order~$p$, we mean an element of the monster whose graded trace function is the principal modulus for~$X_0^+(p)$. A~Fricke element of order $p$ is the same as any representative of the conjugacy class labelled $pA$ in the ATLAS.

\begin{Theorem}[Theorem~\ref{theorem2}]\label{T2}Let $X$ be a monstrous modular curve $X$ of the form $X_0(N){+}e,$ $f,\ldots$. If $p$ is a prime that does not divide $N$, then $p$ has the rationality property for $X$ if and only if the centralizer of $g \in C(X)$ in the monster contains a Fricke element of order $p$.
\end{Theorem}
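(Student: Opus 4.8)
The plan is to deduce Theorem~\ref{T2} from Theorem~\ref{T1} and the structure of monstrous moonshine. Since $X$ is monstrous we have $\operatorname{\textsl{genus}}(X)=0$, so Theorem~\ref{T1} says that $p$ has the rationality property for $X$ if and only if $\operatorname{\textsl{genus}}(X^p)=0$, where $X^p=X_0(Np){+}p,e,f,\ldots$. Now $X^p$ is a genus-zero modular curve of the form $X_0(M){+}(\text{divisors})$ whose Atkin--Lehner quotient contains $w_p$, with $p$ exactly dividing the level $M=Np$; comparing with the three non-monstrous genus-zero curves $X_0(25)$, $X_0(49){+}49$, $X_0(50){+}50$ shows $X^p$ is none of them. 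Hence $\operatorname{\textsl{genus}}(X^p)=0$ is equivalent to $X^p$ being monstrous, and, writing $C_{\mathbb{M}}(g)$ for the centralizer of $g$ in $\mathbb{M}$, it suffices to prove for $g\in C(X)$ that
\begin{gather*}
X^p \text{ is monstrous} \quad\Longleftrightarrow\quad C_{\mathbb{M}}(g) \text{ contains a Fricke element of order } p .
\end{gather*}

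The link between the two sides is the compatibility of the moonshine dictionary with products of commuting elements of coprime orders. The invariance group $\Gamma_g$ of the McKay--Thompson series of $g$ lies between $\Gamma_0(N)$ and its normalizer in $\mathrm{PSL}_2(\mathbb{R})$, and this normalizer factors, prime by prime, over the prime powers exactly dividing the level; the same holds for $\Gamma_0(Np)$. Combining this with the explicit list of monstrous moonshine groups and the monster's conjugacy-class data (Conway--Norton~\cite{conway1979monstrous}, cf.\ also~\cite{conway1985atlas}), I would establish the following \emph{key claim}: if $g$ is a monster element of order $N$ with $\Gamma_g=\Gamma_0(N){+}e,f,\ldots$, if $h$ is a Fricke element of order $p$ (so $\Gamma_h$ is the invariance group of $X_0^+(p)$) and $g,h$ commute, then $gh$ has order $Np$ and $\Gamma_{gh}=\Gamma_0(Np){+}p,e,f,\ldots$; conversely, any monster element $k$ with $\Gamma_k=\Gamma_0(Np){+}p,e,f,\ldots$ has order $Np$, and its prime-to-$p$ part $g_0$ and $p$-part $h_0$ satisfy $\Gamma_{g_0}=\Gamma_0(N){+}e,f,\ldots$ (so $g_0\in C(X)$) and $\Gamma_{h_0}=$ the invariance group of $X_0^+(p)$ (so $h_0$ is a Fricke element of order $p$, commuting with $g_0$).

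Granting the key claim, both directions follow at once. If $h\in C_{\mathbb{M}}(g)$ is a Fricke element of order $p$, then $gh$ is a monster element whose McKay--Thompson series is, by monstrous moonshine, a principal modulus for $\Gamma_{gh}=\Gamma_0(Np){+}p,e,f,\ldots$, so $X^p$ is monstrous. Conversely, if $X^p$ is monstrous with class that of $k$, then the prime-to-$p$ and $p$-parts $g_0,h_0$ of $k$ satisfy $g_0\in C(X)$ and $h_0$ Fricke of order $p$; conjugating $g_0$ onto $g$ carries $h_0$ to a Fricke element of order $p$ lying in $C_{\mathbb{M}}(g)$.

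The main obstacle is the key claim --- the statement that ``combining at coprime levels'' respects the moonshine correspondence. This is not a formal consequence of anything above; it relies on the explicit classification of the moonshine groups, in particular on their realization inside the normalizers of the $\Gamma_0(n)$, where the product decomposition over primes is visible, together with a finite comparison against the conjugacy-class data. Two small points need separate attention. First, the order of the class attached to a monstrous $X^p$ must be divisible by $p$; this is automatic, since the invariance group of $X^p$ has the plain form $\Gamma_0(Np){+}p,e,f,\ldots$ with no ``$n|h$'' part, forcing the order to equal $Np$. Second, when $X=X_0(27){+}27$ the class $C(X)$ is defined to be the smaller, $27A$, of the two classes sharing that principal modulus; this case is checked directly (e.g.\ against the table in Appendix~\ref{AA}), noting that $X^p$, which carries a ``$+p$'', is itself never of the form $X_0(27){+}27$.
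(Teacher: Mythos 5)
Your proposal is correct in outline, but it takes a noticeably different route from the paper, and it is worth spelling out the difference. The paper's proof is leaner: it first records the elementary group-theoretic fact that two conjugacy classes $C_1$, $C_2$ of coprime orders $n_1$, $n_2$ have commuting representatives if and only if there is a class $C$ of order $n_1n_2$ with $C^{n_2}=C_1$ and $C^{n_1}=C_2$, applies this with $C_1=C(X)$ and $C_2=pA$, and then settles the resulting \emph{existence} question -- is there a class of order $pN$ whose $N$th power is $pA$ and whose $p$th power is $C(X)$? -- by a direct GAP computation with the monster's power maps, compared against the rationality lists obtained from Theorem~\ref{T1} (Appendix~\ref{AA}). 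You use the same commuting-representatives lemma implicitly, but you interpose a strictly stronger intermediate statement: your ``key claim'' identifies the full invariance group $\Gamma_{gh}$ of the product class as $\Gamma_0(Np){+}p,e,f,\ldots$, and conversely decomposes the class attached to a monstrous $X^p$ into the correct $p$-part and prime-to-$p$ part. The paper never needs this identification -- only a yes/no existence check -- so your route proves more but also demands more. You are honest that the key claim is the main obstacle and that it rests on a finite comparison with the Conway--Norton assignments and the monster's class data; since the paper's own verification is likewise a finite computational check, neither argument is ``conceptual'' in the end, and yours is essentially a structured reorganization of the same case analysis. Your supporting observations are sound: $X^p$ always carries $w_p$ in its Atkin--Lehner set and has level $Np$ with $p\,\|\,Np$, so it cannot be $X_0(25)$, $X_0(49){+}49$, or $X_0(50){+}50$, whence genus zero is equivalent to monstrous for $X^p$; a plain symbol $\Gamma_0(Np){+}\ldots$ (no $n|h$ part) forces the class order to be $Np$; and the only failure of injectivity of the class-to-group map is the $27A$/$27B$ pair, whose group $\Gamma_0(27){+}27$ never arises as an $X^p$, so your separate treatment of $X=X_0(27){+}27$ closes that loophole. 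If you carry out the finite verification of the key claim, your proof is complete; but be aware that the paper's weaker existence check is all that is actually required.
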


Ogg's observation is again recovered from Theorem \ref{T2} by setting $X = X_0(1)$, and thus we find that Ogg's observation, just like Ogg's theorem, is the first case of a general phenomenon. Theorem \ref{T2} suggests that Ogg's observation is not a statement about the primes dividing the order of the monster per se, but a statement about the Fricke elements of the monster. This hints that a better understanding of the Fricke elements is necessary for a full understanding of Ogg's observation.

\begin{Remark}For the monstrous modular curves of the form $X = X_0(n|h){+}e,f,\ldots$ one can analogously define $X^p$ to be the modular curve $X_0(np|h){+}p,e,f,\ldots$ where as before $\{p,e,f,\ldots\}$ is the set $\{e,f,\ldots,p,pe,pf,\ldots\}$. In view of the characterization given by Theorem~\ref{T1}, we may say that a prime~$p$ has the rationality property for a monstrous modular curve $X$ if $X$ and $X^p$ have the same genus. With this generalized definition, Theorem \ref{T2} holds for any monstrous modular curve~$X$.
\end{Remark}

Theorem \ref{T2} gives a connection between supersingular elliptic curves with level structure and monstrous moonshine. We point out another connection, an unexpected one, between supersingular elliptic curves and umbral moonshine. (See Section~\ref{S2.3} for definitions.) Each of the 23 cases of moonshine in umbral moonshine has an associated genus zero modular curve called its \emph{lambency}. In this paper, we refer to these 23 modular curves as \emph{umbral modular curves}. For each umbral modular curve $X$, there is an \emph{umbral group} $G^X$ that serves as an analogue of the monster group, i.e., there exists a graded representation of $G^X$ such that the graded trace function $H_g^X$ of $g\in G^X$ is a distinguished vector-valued mock modular form of weight 1/2. There is also a naturally defined quotient of $G^X$ denoted $\bar{G}^X$, and we denote by $n_g$ the order of the image of $g\in G^X$ in $\bar{G}^X$. This integer $n_g$ is the level of $H_g^X$.

\begin{Theorem}[Theorem~\ref{theorem3}]\label{T3}Let $X$ be an umbral modular curve, and let $p$ be a prime not dividing the level of $X$. Then the prime $p$ has the rationality property for $X$ if and only if there exists an element $g \in G^X$ such that $n_g = p$ and the shadow of $H^X_g$ is nonzero.
\end{Theorem}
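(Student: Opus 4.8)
The plan is to use Theorem~\ref{T1} to turn the rationality property into a genus condition, and then to match that condition against the classification data of umbral moonshine. Since every umbral modular curve $X$ has genus zero, Theorem~\ref{T1} gives $\tfrac12 Q_p(X) = \operatorname{\textsl{genus}}(X^p)$ for every prime $p$ not dividing the level of $X$, where $X^p$ is obtained from $X$ by adjoining the Fricke-type Atkin--Lehner involution $w_p$ (together with its products with the involutions already present). Hence $p$ has the rationality property for $X$ if and only if $\operatorname{\textsl{genus}}(X^p)=0$, and the theorem reduces to the purely modular assertion
\begin{gather*}
\operatorname{\textsl{genus}}(X^p)=0 \iff \text{there is } g\in G^X \text{ with } n_g=p \text{ and } \operatorname{shadow}\big(H^X_g\big)\neq 0 ,
\end{gather*}
which is what I would try to establish.

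Next I would organize both sides as finite lists. On the left, for each of the $23$ umbral modular curves the genus of $X^p$ grows with $p$, so only finitely many primes yield genus zero; these are computed from the classical genus formulas for quotients of $X_0(M)$ by groups of Atkin--Lehner involutions, and they are exactly the entries recorded in Appendix~\ref{AA}. On the right, each umbral group $G^X$ is finite, so only finitely many primes occur as a level $n_g$, and for each relevant $g$ one reads off, from the explicit construction of the vector-valued mock modular form $H^X_g$ due to Cheng, Duncan and Harvey, whether its shadow --- a weight $3/2$ cusp form built from the unary theta series attached to the Frame shape of $g$ acting on the root lattice of the associated Niemeier lattice --- vanishes. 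One expects the two lists to agree because of the Shimura/Eichler--Selberg correspondence, which pairs the weight $3/2$ shadow with a weight $2$ object whose relevant space has dimension governed by a genus; matching that genus with $\operatorname{\textsl{genus}}(X^p)$ precisely is, however, itself part of the work. The proof is then completed by checking that the two finite lists of pairs $(X,p)$ coincide.

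The main obstacle is that there is no single uniform argument covering all $23$ cases at once: unlike Ogg's theorem, where a monstrous-moonshine element forces $X_0^+(p)$ to have genus zero, here one must pass through the umbral groups one at a time. For the umbral modular curves that are also monstrous modular curves one can economize by invoking Theorem~\ref{T2} --- replacing the genus condition on $X^p$ by the existence of a Fricke element of order $p$ in the centralizer of $g'\in C(X)$ --- and then comparing Frame shapes between the monster and $G^X$; but the remaining non-monstrous case, $X=X_0(25)$ arising from the Niemeier lattice of type $A_{24}$, must be treated by hand, verifying directly that no $X^p$ has genus zero and, in parallel, that no $g\in G^X$ has prime level with nonvanishing shadow. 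Carrying out the genus computations reliably for the composite-level umbral curves, and correctly detecting shadow nonvanishing from the twisted theta series (where cancellation among theta components does occur), is where the bookkeeping demands the most care.
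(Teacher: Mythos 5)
Your proposal is correct and follows essentially the same route as the paper: reduce the rationality property to a finite list of primes via Theorem~\ref{T1} (the Appendix~\ref{AA} data), read off shadow non-vanishing of $H^X_g$ from the twisted Euler character / theta-decomposition data of Cheng--Duncan--Harvey, and verify case by case that the two finite lists of pairs $(X,p)$ coincide. The paper's proof is exactly this inspection, without the (unneeded) detour through a Shimura-correspondence heuristic or through Theorem~\ref{T2}.
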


A mock modular form is a classical modular form if and only if its shadow is zero. Therefore, Theorem \ref{T3} says that the primes that have the rationality property for the umbral modular curves are exactly the primes that occur as levels of strictly mock modular graded trace functions.

Theorem \ref{T3} gives a connection between supersingular elliptic curves and umbral moonshine, while Theorem \ref{T2} provides another connection between supersingular elliptic curves and monstrous moonshine. We may then regard supersingular elliptic curves as a link between monstrous and umbral moonshine.

Lastly, we consider the modular curves of the form $X_0(N)$ of genus zero, and present another way of checking whether a prime has the rationality property for $X_0(N)$. This alternative method explicitly computes \emph{supersingular polynomials for $X_0(N)$}, which we will define shortly. The point is that a prime $p$ has the rationality property for $X_0(N)$ if and only if the $p$th supersingular polynomial for $X_0(N)$ splits completely into linear factors over $\mathbb{F}_p$. Note that these supersingular polynomials have already appeared in the literature, for example, in relation to the Kaneko--Zagier differential equations for low level Fricke groups~\cite{sakai2015modular}, and in connection to Atkin orthogonal polynomials \cite{sakai2011atkin, tsutsumi2007atkin}. However, the methods for computing these supersingular polynomials have been written down only for low levels. We give here a way for calculating these polynomials for all~$X_0(N)$ of genus zero.

Let $p$ be a prime not dividing $N$, and let $T_N$ be the principal modulus for $X_0(N)$ given in Appendix~\ref{AB}. Let $\text{SS}(N)$ be the set of supersingular points of $X_0(N)$ modulo $p$. The $p$th supersingular polynomial for $X_0(N)$ is the polynomial
\begin{gather*}
\text{ss}_p^{(N)}(x) := \prod_{E \in \text{SS}(N)} (x - T_N(E)).
\end{gather*}
That the rationality property can be determined by looking at the splitting property of this polynomial follows from the definition, because a point $x$ in $X_0(N)$ modulo $p$ is defined over $\mathbb{F}_p$ if and only if $T_N(x) \in \mathbb{F}_p$.

In Section~\ref{S4.1}, we describe how a certain polynomial $f_p^{(N)} \in \mathbb{Q}[x]$ arises from a modular form~$f$ of weight $p-1$ and level~1. If $f$ is chosen to be the weight $p-1$ Eisenstein series (among others), then this polynomial turns out to encode almost all the $T_N$-values of supersingular points on~$X_0(N)$ modulo~$p$. The few supersingular points not covered by this polynomial is encoded in another polynomial $g_p^{(N)}$ which we give in Appendix~\ref{AD}. (See also Section~\ref{S4.2}.)

\begin{Theorem}\label{T4}Let $p\geq 5$ be a prime, and let $f$ be any of $E_{p-1}$, $G_{p-1}$, or $H_{p-1}$ as defined in Section~{\rm \ref{S4}}. Then
\begin{gather*} \text{ss}_p^{(N)} \equiv \pm f_p^{(N)}g_p^{(N)} \pmod{p}. \end{gather*}
\end{Theorem}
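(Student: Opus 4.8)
The plan is to reduce the congruence to an identity of effective divisors on the good reduction of $X_0(N)$ modulo $p$, via the classical identification of the supersingular locus with the zero locus of the Hasse invariant, with the ``few'' supersingular points lying over $j=0$ and $j=1728$ being absorbed into $g_p^{(N)}$. I would first dispose of the three choices of $f$ at once. Recall from Section~\ref{S4} that $E_{p-1}$, $G_{p-1}$ and $H_{p-1}$ are $p$-integral weight-$(p-1)$ modular forms of level $1$ (von Staudt--Clausen gives $E_{p-1}\equiv 1\pmod p$), whose reductions mod $p$ are weight-$(p-1)$ forms over $\mathbb{F}_p$ with divisor on $X(1)_{\overline{\mathbb{F}}_p}$ equal to the supersingular divisor plus explicitly known multiples of $[j{=}0]$ and $[j{=}1728]$; in particular $E_{p-1}\bmod p$ is the Hasse invariant $A_p$. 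Since the recipe of Section~\ref{S4.1} producing $f_p^{(N)}$ from $f$ records exactly the ``interior'' polynomial factor of $\operatorname{div}(f\bmod p)$ --- the part away from $j=0,1728,\infty$ and away from the elliptic points and cusps of $\Gamma_0(N)$ --- the reduction $f_p^{(N)}\bmod p$ is independent of the choice, and it suffices to treat $f=E_{p-1}$.

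Next I would pull back along the forgetful map $\pi\colon X_0(N)\to X(1)$. For $p$ not dividing $N$, Igusa's theorem gives good reduction of $X_0(N)$ mod $p$; the reduced map $\pi\colon X_0(N)_{\overline{\mathbb{F}}_p}\to X(1)_{\overline{\mathbb{F}}_p}$ is unramified away from $j=0,1728,\infty$ with ramification over those points exactly as in characteristic zero (here $p\geq 5$ is used, so the automorphism groups of the supersingular curves with $j=0$ or $1728$ are $\mathbb{Z}/6$, $\mathbb{Z}/4$ as in characteristic zero), and $\text{SS}(N)=\pi^{-1}(\text{supersingular points of }X(1)_{\overline{\mathbb{F}}_p})$. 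Compatibility of the $q$-expansion principle with good reduction identifies $\pi^*A_p$ with the Hasse invariant of $X_0(N)_{\overline{\mathbb{F}}_p}$, whence
\[
\operatorname{div}(\pi^*A_p)=\pi^*\!\Big(\textstyle\sum_{E\ \mathrm{ss}}[E]\Big)=\sum_{E'\in\text{SS}(N)}e_\pi(E')\,[E'],\qquad e_\pi(E')=1 \text{ unless } j(E')\in\{0,1728\}.
\]
Using that $X_0(N)$ has genus zero, $T_N$ reduces to a Hauptmodul of $X_0(N)_{\overline{\mathbb{F}}_p}\cong\mathbb{P}^1$, and the explicit expressions of Appendix~\ref{AB} for $j,E_4,E_6,\Delta$ in terms of $T_N$ let me write $\pi^*E_{p-1}$, after dividing by a fixed weight-$(p-1)$ meromorphic form with known divisor (this is what underlies the construction of $f_p^{(N)}$), as a rational function of $x=T_N$ whose numerator reduces mod $p$ to a unit times the product of $f_p^{(N)}(x)$ with the linear factors at the $T_N$-values of the points over $j=0,1728$. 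Comparing this with $\text{ss}_p^{(N)}(x)=\prod_{E'\in\text{SS}(N)}(x-T_N(E'))$, the discrepancy --- the extra multiplicities $e_\pi(E')>1$ over $j=0,1728$ together with the cuspidal and elliptic factors contributed by the trivializing form --- depends only on $N$ and on $p\bmod 12$ (through whether $j=0$, resp.\ $j=1728$, is supersingular, i.e.\ $p\equiv 2\bmod 3$, resp.\ $p\equiv 3\bmod 4$, and the local behaviour of $\pi$ there), and is precisely the polynomial $g_p^{(N)}$ of Appendix~\ref{AD}. This yields $\text{ss}_p^{(N)}\equiv c\,f_p^{(N)}g_p^{(N)}\pmod p$ for some constant $c$; as $\text{ss}_p^{(N)}$ is monic and $f_p^{(N)}g_p^{(N)}$ is monic up to a $p$-adic unit --- both sides having degree $\#\text{SS}(N)$, which is checked by the Eichler--Deuring mass formula or by pulling back the level-one count --- comparing leading coefficients gives $c=\pm1$.

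\textbf{The main obstacle} is the precise bookkeeping \emph{at} $j=0$ and $j=1728$: as a function of $N$ and $p\bmod 12$ one must pin down which of these points is supersingular, the ramification of $\pi\colon X_0(N)\to X(1)$ and the orders of $E_4,E_6$ there, and the behaviour at the elliptic points and cusps of $\Gamma_0(N)$ of the weight-$(p-1)$ form used for trivialization, and then assemble all of this --- and the sign --- into the explicit polynomial $g_p^{(N)}$. A subsidiary technical point to make rigorous is the identity ``reduction of the divisor $=$ divisor of the reduction'' for $\pi^*E_{p-1}$ on $X_0(N)$, i.e.\ that no spurious supersingular vanishing appears, which comes from good reduction together with the $q$-expansion principle.
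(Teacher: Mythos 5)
Your strategy is sound and lands on the right decomposition, but the route you take through the special fibre is genuinely different from, and considerably heavier than, the paper's. The paper never touches the Hasse invariant, the divisor of $\pi^*A_p$, or the ramification of $X_0(N)\to X(1)$ in characteristic $p$. Instead it quotes the level-one theorem of Deuring--Hasse--Deligne--Kaneko--Zagier (Proposition~\ref{P1}) as a black box --- this already packages all of the delicate analysis at $j=0$ and $j=1728$ that you flag as your ``main obstacle'' --- and the passage to level $N$ is then purely formal: the identity $f_p^{(N)}(T_N)\Delta_N^m=f_p^{(1)}(j)\Delta^m$ holds by the very construction of $f_p^{(N)}$ (both sides equal $f/\big(E_4^\delta E_6^\epsilon\big)$), and since $\Delta$ and $\Delta_N$ are supported at cusps this transfers the zero locus of $f_p^{(1)}(j)$ to that of $f_p^{(N)}(T_N)$ on the non-cuspidal locus, while supersingular points of $X_0(N)$ are by definition the points lying over supersingular points of $X_0(1)$. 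Likewise $g_p^{(N)}$ is not reconstructed from ramification and automorphism data: it is \emph{defined} so that $g_p^{(N)}(T_N)=0$ exactly when $j^\delta(j-1728)^\epsilon=0$, by solving $r_N(T)=0$ and $r_N(T)=1728$ via the explicit modular relations of Appendix~\ref{AE}, so the matching you worry about holds by construction and your main obstacle evaporates. What your approach buys is a cleaner handling of multiplicities (the divisor of $\pi^*A_p$ records them automatically, whereas the paper's argument compares only root sets and leading coefficients) and, in effect, an independent proof of Proposition~\ref{P1}; what the paper's buys is brevity and freedom from any geometry on the special fibre beyond Igusa's good reduction. In either version the step worth making fully explicit is the degree count $\deg\big(f_p^{(N)}g_p^{(N)}\big)=\#\mathrm{SS}(N)$, which you rightly note is needed to exclude spurious multiplicities and which the paper leaves implicit.
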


It is straightforward to write an algorithm in a computer algebra system (e.g., Sage~\cite{developers2016sagemath}, PARI/GP \cite{paribordeaux}) that takes a modular form~$f$ of weight $p-1$ and level one as an input and produces the polynomial~$f^{(N)}_p$ as an output. Therefore, Theorem~\ref{T4} provides a simple way of explicitly computing supersingular polynomials for~$X_0(N)$. The first few supersingular polynomials for~$X_0(2)$ are given in Table~\ref{Tab2}. Note that these polynomials split completely into linear factors over $\mathbb{F}_p$ for $p = 5, 7, 11, 23$. Therefore these primes have the rationality property for~$X_0(2)$, a fact that agrees with the information in Table~\ref{Tab1}. Moreover, the number of quadratic factors in $\text{ss}_p^{(2)}$ coincides with the genus of $X_0(2)^p = X_0(2p){+}p$ (cf.\ Table~\ref{Tab3} in Section~\ref{S3.1}) which is consistent with Theorem~\ref{T1}.

\begin{table}\centering
\caption{Supersingular polynomials for $X_0(2)$.}\label{Tab2}\vspace{1mm}

\begin{tabular}{|c|c|}
\hline
$p$ & $\text{ss}^{(2)}_p(x)$\tsep{3pt}\\
\hline \hline
5 & $(x + 1)$\tsep{1pt}\bsep{1pt}\\
\hline
7 & $(x + 1) \cdot (x + 6)$\tsep{1pt}\bsep{1pt}\\
\hline
11 & $(x + 3) \cdot (x + 5) \cdot (x + 9)$ \tsep{1pt}\bsep{1pt}\\
\hline
13 & $(x + 1) \cdot \big(x^{2} + 8 x + 1\big)$\tsep{2pt}\bsep{1pt}\\
\hline
17 & $(x + 1) \cdot (x + 16) \cdot \big(x^{2} + 13 x + 16\big)$ \tsep{2pt}\bsep{1pt}\\
\hline
19 & $(x + 1) \cdot (x + 7) \cdot (x + 11) \cdot \big(x^{2} + 9 x + 11\big)$\tsep{2pt}\bsep{1pt} \\
\hline
23 & $(x + 3) \cdot (x + 5) \cdot (x + 15) \cdot (x + 16) \cdot (x + 17) \cdot (x + 18)$\tsep{1pt}\bsep{1pt}\\
\hline
29 & $(x + 16) \cdot (x + 23) \cdot (x + 24) \cdot \big(x^{2} + 24 x + 16\big) \cdot \big(x^{2} + 25 x + 23\big)$ \tsep{2pt}\bsep{1pt}\\
\hline
\end{tabular}
\end{table}

The rest of the paper is organized as follows. The necessary background on modular curves, mock modular forms and moonshine is given in Section~\ref{S2}. Theorems~\ref{T1}, \ref{T2} and \ref{T3} are proven in Section~\ref{S3}. Finally, the proof of Theorem~\ref{T4} is given in Section~\ref{S4}.

\section{Moduli spaces, mock modular forms, moonshine}\label{S2}

\subsection{Moduli spaces}\label{S2.1}

In this subsection, we define the modular curves $X_0(N)$ and the Atkin--Lehner involutions on them. We also describe the Deligne--Rapoport model for $X_0(pN)$ modulo a prime $p$ not divi\-ding~$N$, closely following Ogg's description in~\cite{ogg1975reduction}.

For every positive integer $N$, the congruence subgroup $\Gamma_0(N)$ acts on the complex upper half-plane $\mathbb{H}$ by linear fractional transformations. The orbit space $Y_0(N) := \Gamma_0(N)\backslash \mathbb{H}$ is naturally a Riemann surface, and it admits a moduli interpretation: the points of~$Y_0(N)$ parametrize isomorphism classes of cyclic isogenies, of degree~$N$, of complex elliptic curves. This Riemann surface can be compactified by adjoining the orbits of~$\Gamma_0(N)$ on $\mathbb{Q} \cup \{\infty\}$, called the cusps of~$\Gamma_0(N)$, and we denote this compactification by~$X_0(N)$.

Let $a,b,c,d \in \mathbb{Z}$ and let $e$ be an exact divisor of $N$, by which we mean that $e|N$ and \mbox{$(e,N/e) = 1$}. If the matrix $w_e:= (\begin{smallmatrix} ae & b \\ cN & de \end{smallmatrix})$ has determinant~$e$, then $\frac{1}{\sqrt{e}}w_e$ lies in the normalizer of $\Gamma_0(N)$ in $\text{SL}_2(\mathbb{R})$. This element of the normalizer induces an involution on $X_0(N)$, called an Atkin--Lehner involution of~$X_0(N)$, which is also denoted by $w_e$. (The Atkin--Lehner involution~$w_N$ of $X_0(N)$ is also known as the Fricke involution.) In order to describe the action of $w_e$ using the moduli interpretation of $X_0(N)$, separate an $N$-isogeny into sub-isogenies of degrees~$e$ and~$N/e$. The involution $w_e$ acts as the transpose on the $e$-part of the isogeny and leaves the $N/e$-part fixed.

By a theorem of Igusa, for any prime $p$ not dividing $N$, the modular curve $X_0(N)$ has good reduction modulo $p$. On the other hand, the reduction of $X_0(pN)$ modulo $p$ is a singular curve obtained from glueing two copies of $X_0(N)$ modulo $p$ at the supersingular points. More precisely, the non-cuspidal points of $X_0(pN)$ modulo $p$ parametrize $pN$-isogenies of elliptic cuves over $\mathbb{F}_p$. We separate a $pN$-isogeny into its $N$-part and its $p$-part. There are as many $N$-isogenies in characteristic~0 as in characteristic $p$, but there are only two $p$-isogenies in characteristic $p$, namely the Frobenius and its transpose. The first copy of $X_0(N)$ modulo $p$ parametrizes those isogenies whose $p$-part is the Frobenius; the second copy, those whose $p$-part is the transpose of the Frobenius. Their intersection consists of isogenies whose $p$-part may be thought of as either the Frobenius or its transpose~-- the supersingular points.

\subsection{Mock modular forms}\label{S2.2}

In this subsection, we recall the definition of a mock modular form and its shadow. These objects, which have their origins in the last letter of Ramanujan to Hardy, are now found in several areas of contemporary mathematics including moonshine. We refer to \cite{bringmann2017harmonic} for more details about mock modular forms and their applications.

Let $\Gamma$ be a discrete subgroup of $\text{SL}_2(\mathbb{R})$ and let $k$ be a half-integer. We say that a holomorphic function $f$ on $\mathbb{H}$ is a \emph{mock modular form of weight $k$ for $\Gamma$} if it has at most exponential growth as $\tau$ approaches any cusp of $\Gamma$, and if there exists a modular form $S(f)$ of weight $2-k$ on $\Gamma$ such that the sum $f + S(f)^*$ transforms like a holomorphic modular form of weight $k$ on $\Gamma$. Here, the function $S(f)^*$ is a solution to the differential equation:
\begin{gather*} (4\pi y)^k\dfrac{\partial S(f)^*(\tau)}{\partial \overline{\tau}} = -2\pi {\rm i} \overline{S(f)(\tau)}, \qquad \tau = x + {\rm i}y.\end{gather*}
The modular form $S(f)$ is called the \emph{shadow of $f$}, is uniquely determined by $f$, and is equal to zero if and only if $f$ is a usual modular form.

\subsection{Umbral moonshine}\label{S2.3}

Over the last 40 years, several cases of moonshine have been observed and proven. Most notable for the amount of research that their discovery inspired are the original monstrous moonshine, discussed in Section~\ref{S1}, and the more recent Mathieu moonshine. The latter is now known to belong to a family of moonshine collectively called umbral moonshine, which is the topic of this subsection.

In 2010 Eguchi, Ooguri and Tachikawa observed a numerical coincidence reminiscent of the McKay--Thompson observation \cite{eguchi2011notes}. The decomposition of the elliptic genus of a K3 surface into irreducible characters of the $\mathcal{N}=4$ superconformal algebra gives rise to a $q$-series
\begin{gather*} H(\tau) = 2q^{-1/8}\big({-}1 + 45q + 231q^2 + 770q^3 + 2277q^4 + \cdots\big).\end{gather*}
It was noted that $H(\tau)$ is a mock modular form and its first few coefficients are dimensions of irreducible representations of the largest Mathieu group $M_{24}$. Mathieu moonshine, formulated in a series of papers \cite{cheng2010k3, eguchi2011note, gaberdiel2010mathieu, gaberdiel2010mathieu2} and proven by Gannon \cite{gannon2016much}, is the statement that there exists an infinite-dimensional graded representation of $M_{24}$ with the following property: the graded trace functions are certain distinguished mock modular forms of weight~$1/2$. These graded trace functions are Rademacher sums which are natural weight 1/2 analogues of the principal modulus property \cite{cheng2012rademacher}. Mathieu moonshine thus expanded the class of automorphic objects considered in moonshine to include mock modular forms and other weights. In a series of papers Cheng, Duncan and Harvey identified Mathieu moonshine as one of a family of correspondences between finite groups and mock modular forms \cite{cheng2014umbral, cheng2014umbral2, cheng2018weight}. They referred to this conjectured family of correspondences as umbral\footnote{The word umbral was chosen to highlight the existence of \emph{shadows} in this {moon}shine.} moonshine.

Briefly, umbral moonshine is a collection of 23 cases of moonshine relating groups arising from lattices to (vector-valued) mock modular forms. The lattices in umbral moonshine are the \emph{Niemeier lattices}, which are the even unimodular self-dual lattices with roots (i.e.\ lattice vectors of length 2), and they are determined by their \emph{Niemeier root systems}. (For more information about root systems and their Dynkin diagrams, see \cite{humphreys2012introduction}.) These root systems have rank 24 and their simple components are root systems of ADE type of the same Coxeter number. A Niemeier root system is said to be of \emph{$A$-type} if it has a simple component of type~$A$, it is said to be of \emph{$D$-type} if it has a simple component of type $D$ but no type $A$ component, and it is said to be of \emph{$E$-type} if it only has type $E$ components.

Given a Niemeier root system, there is a group of genus zero naturally attached to it called its \emph{lambency}. The lambencies are defined as follows: The Coxeter number of a Niemeier lattice is the common Coxeter number of its simple components. The Coxeter numbers of the $A$-type Niemeier lattices are the same integers $N$ for which the genus of $\Gamma_0(N)$ is zero, and the lambency of such a Niemeier lattice of Coxeter number $N$ is defined to be $\Gamma_0(N)$. Similarly, the Coxeter numbers of the $D$-type Niemeier lattices are the same integers $2N$ for which the genus of $\Gamma_0(2N){+}N$ is zero, and the lambency of such a Niemeier lattice of Coxeter number $2N$ is defined to be $\Gamma_0(2N){+}N$. As discussed in~\cite{cheng2014umbral2}, the genus zero groups naturally attached to the Niemeier root systems $E_6^4$ and $E_8^3$ of $E$-type are $\Gamma_0(12){+}4$ and $\Gamma_0(30){+}6,10,15$ respectively, and these corresponding groups are defined to be their lambencies.

Let $X$ be a Niemeier root system and let $L^X$ be the associated Niemeier lattice. The reflections through the roots of $L^X$ generate a normal subgroup of the full automorphism group of $L^X$ known as the \emph{Weyl group} of $X$. The \emph{umbral group} $G^X$, which plays the same role as the monster in monstrous moonshine, is defined to be the quotient of the group of automorphisms of $L^X$ by the Weyl group of $X$. Umbral moonshine associates to each element $g\in G^X$ a distinguished (vector-valued) mock modular form $H^X_g$. Duncan, Griffin and Ono showed the existence of an infinite-dimensional graded representation of $G^X$ with graded trace functions equal to $H^X_g$ \cite{duncan2015proof}. There is also a naturally defined quotient of $G^X$ denoted $\bar{G}^X$, and we denote by $n_g$ the order of the image of $g\in G^X$ in $\bar{G}^X$. This integer $n_g$ is the level of $H_g^X$.

\section{Supersingular elliptic curves and moonshine}\label{S3}

\subsection{Higher level Ogg's theorem}\label{S3.1}

\looseness=-1 In this subsection we prove Theorem \ref{T1}, a generalization of Ogg's theorem, and use this to obtain the primes that have the rationality property for the curves $X_0(N){+}e,f,\ldots$ of genus zero.

Let $X = X_0(N){+}e,f,\ldots$ and let $p$ be a prime not dividing $N$. Recall from Section~\ref{S1} the following notations: $Q_p(X)$ is the number of supersingular points of $X$ modulo $p$ that are not defined over $\mathbb{F}_p$; $\operatorname{\textsl{genus}}(X)$ is the genus of $X$; and $X^p$ is the modular curve $X_0(Np){+}p,e,f,\ldots$ obtained by taking the quotient of $X_0(Np)$ by Atkin--Lehner involutions $w_p$, $w_e$, $w_f$, $\ldots$ where $\{p,e,f,\ldots\}$ is understood to be the set $\{ e,f,\ldots, p, pe, pf, \ldots \}$.

\begin{Theorem}\label{theorem1} Let $N$ be a positive integer, let $e,f,\ldots$ be exact divisors of $N$, and let $X = X_0(N){+}e,f,\ldots$. If $p$ is a prime not dividing $N$, then
\begin{gather*} \dfrac{1}{2}Q_p(X) = \operatorname{\textsl{genus}}(X^p) - \operatorname{\textsl{genus}}(X).\end{gather*} Consequently, $p$ has the rationality property for $X$ if and only if the modular curves $X$ and $X^p$ have the same genus.
\end{Theorem}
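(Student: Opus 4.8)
The plan is to compare the reduction mod $p$ of the curve $X_0(Np)$ with two copies of the reduction of $X_0(N)$, and to descend this comparison through the relevant Atkin--Lehner quotients. By the Deligne--Rapoport description recalled in Section~\ref{S2.1}, the curve $X_0(Np)$ mod $p$ is a nodal curve consisting of two copies of $X_0(N)$ mod $p$ crossing transversally at the supersingular points, the two copies being interchanged by the Fricke involution $w_p$ (which on the $p$-part of an isogeny swaps Frobenius with its transpose). The Atkin--Lehner involutions $w_e, w_f, \ldots$ attached to divisors of $N$ act on each copy separately and commute with $w_p$, so the quotient $X^p = X_0(Np){+}p,e,f,\ldots$ mod $p$ is obtained by first forming $Y := X_0(N){+}e,f,\ldots \bmod p = X \bmod p$ (the common quotient of each of the two copies by $w_e,w_f,\ldots$), then gluing two copies of $Y$ at the images of the supersingular points, and finally dividing by the residual $w_p$ which folds the two copies onto one another. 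I would check that the points of $Y$ that lie over a supersingular point of $X_0(N){+}e,f,\ldots$ not defined over $\mathbb{F}_p$ are interchanged in pairs by $w_p$, whereas those defined over $\mathbb{F}_p$ (and any degenerate fixed configurations) are not. This is the one genuinely delicate point, and I expect it to be the main obstacle: one must verify that the $w_e$-quotients do not accidentally identify a $w_p$-swapped pair, i.e.\ that $w_p$ still acts without extra fixed supersingular points on $Y$ away from the $\mathbb{F}_p$-rational ones. This should follow from the fact that $w_p$ commutes with each $w_e$ and acts as a genuine involution (not the identity) on the set of supersingular points, together with the observation that a supersingular point of $X$ is $\mathbb{F}_p$-rational precisely when it is fixed by $w_p$ after the gluing.

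Granting this, the topology is a routine Euler-characteristic bookkeeping. Write $n = \#\operatorname{SS}(X)$ for the total number of supersingular points of $X = X_0(N){+}e,f,\ldots$ mod $p$, and split $n = r + 2s$ where $r$ is the number of $\mathbb{F}_p$-rational ones and $2s = Q_p(X)$ is the number of non-rational ones (which come in conjugate pairs). Before quotienting by the final $w_p$, the glued curve $\widetilde{X}^p$ is two copies of $X$ meeting at $n$ nodes; normalizing and using the standard genus formula for a nodal curve built from two smooth components, $\operatorname{\textsl{genus}}(\widetilde{X}^p) = 2\operatorname{\textsl{genus}}(X) + n - 1$. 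The involution $w_p$ on $\widetilde{X}^p$ swaps the two components, fixes the $r$ rational nodes, and swaps the $s$ pairs of non-rational nodes; its fixed locus therefore consists of exactly $r$ nodes and no component. Applying the Riemann--Hurwitz formula for the degree-$2$ quotient $\widetilde{X}^p \to X^p$, and taking care of the local contribution of a node fixed by an involution that swaps its two branches (such a node becomes a smooth point of the quotient, contributing in the same way as a ramification point), I get $2 - 2\operatorname{\textsl{genus}}(X^p) = \tfrac12\big(2 - 2\operatorname{\textsl{genus}}(\widetilde{X}^p) + r\big)$ after the appropriate normalization, which upon substituting $\operatorname{\textsl{genus}}(\widetilde{X}^p) = 2\operatorname{\textsl{genus}}(X) + r + 2s - 1$ collapses to $\operatorname{\textsl{genus}}(X^p) = \operatorname{\textsl{genus}}(X) + s$. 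Since $s = \tfrac12 Q_p(X)$ this is the asserted identity. (Alternatively, and perhaps more cleanly, one can avoid the normalization subtleties by passing directly to the dual graph / weighted-tree description: $X^p$ is a chain-like gluing of one copy of $X$ with itself along $r$ rational points plus $s$ conjugate pairs, and its arithmetic genus is $\operatorname{\textsl{genus}}(X) + s$ by inspection.)

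The final "consequently" is then immediate: $p$ has the rationality property for $X$ means by definition that every supersingular point of $X$ mod $p$ is defined over $\mathbb{F}_p$, i.e.\ $Q_p(X) = 0$, which by the displayed formula is equivalent to $\operatorname{\textsl{genus}}(X^p) = \operatorname{\textsl{genus}}(X)$. I should also remark, for completeness, that $X^p$ always has good reduction issues handled by Igusa's theorem since $p \nmid N$ guarantees $X_0(N)$ reduces well, and that both genera are computed over any fixed algebraically closed field, so the comparison is legitimate. I would present the argument in the order: (1) Deligne--Rapoport picture for $X_0(Np)$ and descent of the $w_e$'s; (2) the lemma that $w_p$ swaps exactly the non-$\mathbb{F}_p$-rational supersingular points in conjugate pairs; (3) the Euler-characteristic / Riemann--Hurwitz computation; (4) the corollary. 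Step (2) is where the real content lies; steps (3)--(4) are formal.
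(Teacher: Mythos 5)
Your proposal is correct and follows essentially the same route as the paper: both rest on the model of $X_0(Np){+}e,f,\ldots$ modulo $p$ as two copies of $X$ crossing at the supersingular points, with $w_p$ interchanging the components and acting as Frobenius on the nodes, so that the quotient $X^p$ is a single copy of $X$ self-glued along the $\tfrac{1}{2}Q_p(X)$ conjugate pairs and hence of arithmetic genus $\operatorname{\textsl{genus}}(X) + \tfrac{1}{2}Q_p(X)$. The only differences are that the paper imports this model for the Atkin--Lehner quotient directly from Furumoto--Hasegawa instead of descending the Deligne--Rapoport picture through the $w_e$'s by hand, and that it reads the genus off the self-glued model directly (your parenthetical alternative) rather than via the Riemann--Hurwitz bookkeeping, whose displayed intermediate equation is in any case off by a term in $r$ even though your final answer is right.
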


\begin{proof}The proof follows that of Ogg \cite{ogg1975reduction}. The ingredient that we need for his proof to go through is a model for the (singular) curve $X_0(Np){+}e,f,\ldots$ modulo $p$. We recall the model for this curve here which is explained in~\cite[Section~5]{furumoto1999hyperelliptic}. If $e,f,\ldots$ are exact divisors of~$N$, so that each of them is coprime with~$p$, and if
\begin{gather*} X := X_0(N){+}e,f,\ldots \mod p,\qquad
 X(p) := X_0(Np){+}e,f,\ldots \mod p,\end{gather*}
then $X(p)$ consists of two copies of $X$ that intersect at the supersingular points of $X$ modulo $p$.

\looseness=-1 From here, the proof of Ogg goes through: If $X_1$ and $X_2$ are the components of $X(p)$, then $w_p$ defines an isomorphism of $X_1$ onto $X_2$ that acts as the Frobenius on $X_1 \cap X_2$~-- the supersingular points of $X$. Therefore the model of $X(p)/(w_p) = X^p$ modulo $p$ is given by one copy of $X$ which intersects itself at each point corresponding to a pair of conjugate supersingular points of~$X$, i.e., the supersingular points of $X$ not defined over $\mathbb{F}_p$. From this model of $X^p$, we obtain $\operatorname{\textsl{genus}}(X^p) = \operatorname{\textsl{genus}}(X) + \frac{1}{2}Q_p(X)$, which is the first part of Theorem \ref{T1}. The second part of the theorem readily follows since by definition $p$ has the rationality property for $X$ if and only if $Q_p(X) = 0$.
\end{proof}

We apply Theorem \ref{T1} to the case when the genus of $X = X_0(N){+}e,f,\ldots$ is zero. Considering such cases leads to a generalization of Ogg's observation which is the subject of the next subsection. According to Theorem~\ref{T1}, given such an~$X$ the prime $p$ has the rationality property for~$X$ if and only if the genus of~$X^p$ is zero. For example, Table~\ref{Tab3} gives the genus of~$X^p$ for~$X$ of level~2 and for the first few primes. From this we see that the primes 3, 5, 7, 11, 13, 17, 19, 23, 31, 47 have the rationality property for~$X_0^+(2)$ and the primes 3, 5, 7, 11, 23 have the rationality property for~$X_0(2)$. Moreover we know from \cite{ferenbaugh1993genus} that this list is complete since 94 is the largest level of the form $2p$ for which there exists a genus zero quotient of $X_0(2p)$ by Atkin--Lehner involutions. Similarly, we enumerate all the primes $p$ for which $X^p$ has genus zero for all $X = X_0(N){+}e,f,\ldots$ of genus zero, and we compile the results in Appendix~\ref{AA}.

\begin{Remark}We identified the primes that have the rationality property for modular curves $X_0(N){+}e,f,\ldots$ of genus zero. We mention here that Ogg already obtained the primes that have the rationality property for modular curves $X_0(N)$ regardless of the genus \cite{ogg1975reduction}. He did not however consider their quotients by Atkin--Lehner involutions. His result is as follows: $2$ is the only prime that has the rationality property for $X_0(11)$ and $X_0(17)$; also, if $N\neq 11, 17$ and if the genus of $X_0(N)$ is positive, then there are no primes that have the rationality property for~$X_0(N)$. We point out here that there is a typo in his list for~$X_0(4)$; the prime $5$ should not be in the list.
\end{Remark}

\begin{table}[]\centering
\caption{Genus of $X^p$.}\label{Tab3}\vspace{1mm}

\begin{tabular}{|c|c|c|c|c|c|c|c|c|c|c|c|c|c|c|}
\hline
 $p$ & 3 & 5 & 7 & 11 & 13 & 17 & 19 & 23 & 29 & 31 & 37 & 41 & 43 & 47 \\ \hline
$\text{gen}\big(\big(X_0^+(2)\big)^p\big)$ & 0 & 0 & 0 & 0 & 0 & 0 & 0 & 0 & 1 & 0 & 1 & 1 & 1 & 0 \tsep{1pt}\bsep{1pt}\\ \hline
$\text{gen}\big(\big(X_0(2)\big)^p\big)$ & 0 & 0 & 0 & 0 & 1 & 1 & 1 & 0 & 2 & 1 & 4 & 3 & 4 & 1 \tsep{1pt}\bsep{1pt}\\ \hline
\end{tabular}
\end{table}

\subsection{Higher level Ogg's observation}\label{S3.2}

Suppose that $X = X_0(N){+}e,f,\ldots$ is a monstrous modular curve (cf.~Section~\ref{S1}). In this subsection we provide a characterization, which generalizes Ogg's observation, of the primes that have the rationality property for $X$.

Recall that in Section~\ref{S1}, we defined $C(X)$ to be the unique conjugacy class of the monster associated via monstrous moonshine to $X$ if $X \neq X_0(27){+}27$, and we defined $C(X_0(27){+}27)$ to be the conjugacy class of the monster labelled 27A in the ATLAS \cite{conway1985atlas}. We also defined a Fricke element of prime order $p$ to be an element of the monster whose graded trace function is the principal modulus for~$X^+_
0(p)$.

\begin{Theorem}\label{theorem2} Let $X$ be a monstrous modular curve $X$ of the form $X_0(N){+}e,f,\ldots$. If $p$ is a prime that does not divide $N$, then $p$ has the rationality property for $X$ if and only if the centralizer of $g \in C(X)$ in the monster contains a Fricke element of order~$p$.
\end{Theorem}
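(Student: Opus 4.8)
The plan is to combine Theorem~\ref{T1} with the structure of the monster's centralizer subgroups as encoded by monstrous moonshine. By Theorem~\ref{T1}, for a prime $p \nmid N$ the prime $p$ has the rationality property for $X = X_0(N){+}e,f,\ldots$ if and only if $\operatorname{\textsl{genus}}(X^p) = \operatorname{\textsl{genus}}(X)$, where $X^p = X_0(Np){+}p,e,f,\ldots$. Since $X$ is monstrous and genus zero, this reduces to showing: $X^p$ has genus zero if and only if the centralizer $C_{\mathbb{M}}(g)$ of $g \in C(X)$ contains a Fricke element of order $p$. So the whole theorem is really a statement relating the genus-zero property of a specific Atkin--Lehner quotient to group-theoretic data about centralizers in $\mathbb{M}$, and the natural tool is the explicit correspondence (due to Conway--Norton, and the associated data in the ATLAS and in Norton's generalized moonshine) between the McKay--Thompson series, their invariance groups, and the conjugacy classes.

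First I would set up the dictionary. For $g \in C(X)$, monstrous moonshine attaches to $X$ the genus-zero group $\Gamma_g \leq \mathrm{SL}_2(\mathbb{R})$ with $X_{\Gamma_g} = X$, of the form $\Gamma_0(N|h){+}e,f,\ldots$. I would then bring in generalized (Norton) moonshine: an element $z$ of order $p$ in $C_{\mathbb{M}}(g)$ that is a Fricke element of order $p$ (i.e.\ of class $pA$) gives, via the commuting pair $(g,z)$, a two-variable function whose specializations are Hauptmoduln, and the key point is that adjoining such a $z$ to the picture corresponds precisely to passing from the group $\Gamma_g$ to the larger group obtained by also quotienting by the Fricke-type involution $w_p$ at level $p$ — that is, to the group whose modular curve is exactly $X^p = X_0(Np){+}p,e,f,\ldots$. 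Thus "$C_{\mathbb{M}}(g)$ contains a Fricke element of order $p$" should be equivalent to "$X^p$ arises in monstrous moonshine as the curve of some conjugacy class", and since every monstrous modular curve has genus zero, this is equivalent to $\operatorname{\textsl{genus}}(X^p) = 0$. Conversely, if $\operatorname{\textsl{genus}}(X^p) = 0$ one needs to know that $X^p$ is then actually monstrous (not one of the three exceptional genus-zero curves $X_0(25)$, $X_0(49){+}49$, $X_0(50){+}50$, which must be excluded or checked separately), and that the corresponding class lies in $C_{\mathbb{M}}(g)$ with the right order.

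The cleanest way to make the last two steps rigorous, rather than invoking generalized moonshine abstractly, is probably to run a finite verification: the monstrous modular curves of the form $X_0(N){+}e,f,\ldots$ are a finite, explicitly tabulated list (the 171-or-so genus-zero groups of Conway--Norton), and the centralizers of their classes in $\mathbb{M}$, together with the orders of Fricke elements therein, are available from the ATLAS power maps and the Norton--Conway--Queen data. So I would: (i) for each such $X$, read off $C(X)$ and list the orders $p$ of $pA$-elements in $C_{\mathbb{M}}(g)$; (ii) independently, using Theorem~\ref{T1} and the Ferenbaugh--Conway--Norton classification of genus-zero Atkin--Lehner quotients, list all primes $p \nmid N$ with $\operatorname{\textsl{genus}}(X^p) = 0$; (iii) check the two lists agree case by case. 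Appendix~\ref{AA} already contains exactly the data for (ii), so the proof amounts to matching it against the centralizer data. The main obstacle is conceptual rather than computational: to present this as a genuine \emph{explanation} rather than a coincidence one wants the generalized-moonshine argument of the previous paragraph to work uniformly, and the delicate point there is controlling the passage $\Gamma_g \rightsquigarrow \Gamma_{g}^{(p)}$ — i.e.\ showing that the invariance group of the McKay--Thompson series of a $pA$-element commuting with $g$ is exactly the $w_p$-extension corresponding to $X^p$, including correctly handling the $h \neq 1$ (non-$\Gamma_0$) cases and the $X_0(27){+}27$ ambiguity. I expect to spend most of the proof nailing down that correspondence and then dispatching the exceptional and non-$\Gamma_0$ cases by direct table inspection.
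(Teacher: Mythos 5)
Your fallback plan --- reduce the geometric side to the genus-zero data of Appendix~\ref{AA} via Theorem~\ref{T1}, reduce the group-theoretic side to ATLAS/GAP power-map data, and match the two finite lists case by case --- is exactly the paper's proof; the paper's only extra ingredient is the elementary observation that, for coprime orders, the classes $C(X)$ and $pA$ have commuting representatives if and only if some conjugacy class of order $pN$ has $N$th power $pA$ and $p$th power $C(X)$, which turns the centralizer condition into a pure power-map check. The generalized-moonshine correspondence you sketch first is not used (or needed) in the paper's argument, which makes no attempt at a uniform conceptual explanation.
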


\begin{proof}Given a conjugacy class $C$ of a finite group $G$, denote by $C^n$ the conjugacy class of~$G$ containing the $n$th powers of elements of $C$. Suppose $C_1$ and $C_2$ are conjugacy classes of orders~$n_1$ and~$n_2$ such that $(n_1,n_2)=1$. Then the classes $C_1$ and $C_2$ have representatives that commute if and only if there exists a conjugacy class~$C$ of order~$n_1n_2$ such that $C^{n_2} = C_1$ and $C^{n_1} = C_2$.

Now, the centralizer of $g \in C(X)$ in the monster contains a Fricke element of order $p$ if and only if the conjugacy classes $C(X)$ and $pA$ have representatives that commute. From the previous paragraph, this occurs if and only if there exists a conjugacy class of order $pN$ whose $N$th power is $pA$ and whose $p$th power is $C(X)$. One can manually check from the power maps of the conjugacy classes of the monster, using GAP \cite{GAP4} for instance, that this latter condition occurs if and only if $p$ has the rationality property for $X$.
\end{proof}

We illustrate by way of an example how this proof works. We consider the case when $N = 2$. This implies that $X = X_0^+(2)$ or $X_0(2)$, and the conjugacy class $C(X)$ is $2A$ or $2B$ respectively. In Table~\ref{Tab4}, we label the columns by the primes $p$ dividing the order of the monster~-- these are the primes that can occur as prime orders of Fricke elements~-- and we label the rows by $C(X)$. The entries in row $C(X)$ and column $p$ are the conjugacy classes of the monster of order $2p$ whose $p$th power is $C(X)$. For each cell, we can check which of those conjugacy classes have 2nd power equal to $pA$, and write these in boldface. Then from the proof, there is a conjugacy class in row $C(X)$ and column $p$ in boldface if and only if the centralizer of $g \in C(X)$ contains a Fricke element of order $p$.

\begin{table}[]\centering
\caption{}\label{Tab4}\vspace{1mm}

\begin{tabular}{|c|c|c|c|c|c|c|c|c|}
\hline
 & 3 & 5 & 7 & 11 & 13 & 17 & 19 \\ \hline
$2A$ & \bm{$6A$} $6D$ & \bm{$10A$} $10C$ & \bm{$14A$} & \bm{$22A$} & \bm{$26A$} & \bm{$34A$} & \bm{$38A$} \\ \hline
$2B$ & $6B$ \bm{$6C$} $6E$ $6F$ & \bm{$10B$} $10D$ $10E$ & \bm{$14B$} $14C$ & \bm{$22B$} & $26B$ & & \\ \hline
\end{tabular}

\begin{tabular}{|c|c|c|c|c|c|c|c|}
\hline
 & 23 & 29 & 31 & 41 & 47 & 59 & 71\\ \hline
$2A$ & \bm{$46CD$} & & \bm{$62AB$} & & \bm{$94AB$} & & \\ \hline
$2B$ & \bm{$46AB$} & & & & & & \\ \hline
\end{tabular}
\end{table}

From this table, we find that the odd primes arising as order of Fricke elements in the centralizer of $g \in C(X_0^+(2))$ in the monster are 3, 5, 7, 11, 13, 17, 19, 23, 31, 47. These are the same primes that have the rationality property for $X_0^+(2)$. Similarly, the Fricke elements of odd prime orders in the centralizer of $g \in C(X_0(2))$ in the monster have orders 3, 5, 7, 11, 23, and these coincide with the primes that have the rationality property for $X_0(2)$.

\subsection{Observation related to umbral moonshine}\label{S3.3}

In this subsection, we consider the modular curves that occur (as lambencies) in the theory of umbral moonshine. Given such an umbral modular curve $X$, we present another characterization of the primes that have the rationality property for $X$. This will be in terms of the modularity properties of certain graded trace functions.

Recall from Section~\ref{S2.3} that associated to each of the 23 cases of umbral moonshine are: a~modular curve $X$ of genus zero called its lambency; an umbral group $G^X$; and a set of graded trace functions $H^X_g$ for each $g \in G^X$. The functions $H^X_g$ are vector-valued mock modular forms of weight~1/2 and level~$n_g$, and a formula for the shadow of $H^X_g$ may be given in terms of naturally defined characters of $G^X$ called \emph{twisted Euler characters} (cf.\ Section~5.1 of~\cite{cheng2014umbral2}). The formulas show that the shadow of $H_g^X$ is zero if and only if the value of all the twisted Euler characters at $g$ is 0.

Consider the case of umbral moonshine of lambency $X_0(2)$ (i.e., Mathieu moonshine). This has umbral group equal to the largest Mathieu group $M_{24}$. The following table gives the conjugacy classes $[g]$ of $M_{24}$ with $n_g$ an odd prime, and the values of the twisted Euler character $\overline{\chi}_g^A$ at these conjugacy classes.

\begin{center}
\begin{tabular}{|c|c|c|c|c|c|c|}
\hline
 $[g]$ & $3A$ & $3B$ & $5A$ & $7AB$ & $11A$ & $23AB$ \\ \hline
$n_g$ & 3 & 3 & 5 & 7 & 11 & 23 \\ \hline
$\overline{\chi}_g^A$ & 6 & 0 & 4 & 3 & 2 & 1 \\ \hline
\end{tabular}
\end{center}

From this table, we see that there exists an element $g$ of $G^X$, where $n_g$ is an odd prime, for which the shadow $H_g^X$ is \emph{nonzero} if and only if $p = 3, 5, 7, 11, 23$. These primes are precisely the ones that have the rationality property for $X_0(2)$. In this example, one could argue that these primes are also simply the primes appearing as~$n_g$, but as the next example shows, in some cases there are primes that appear as $n_g$ but not as the level of a mock modular form with non-vanishing shadow.

Consider the umbral moonshine case of lambency $X = X_0(5)$. The following table gives the conjugacy classes $[g]$ of $G^X$ whose order $n_g$ is a prime $p\neq 5$, and the values of the twisted Euler characters $\overline{\chi}_g^A$ and $\chi_g^A$ at these conjugacy classes.

\begin{center}
\begin{tabular}{|c|c|c|c|c|}
\hline
 $[g]$ & $2B$ & $2C$ & $3A$ & $6A$ \\ \hline
$n_g$ & 2 & 2 & 3 & 3 \\ \hline
$\overline{\chi}_g^A$ & 2 & 2 & 0 & 0 \\ \hline
$\chi_g^A$ & -2 & 2 & 0 & 0 \\ \hline
\end{tabular}
\end{center}

In this case of umbral moonshine, the graded trace function $H_g^X$ for any order 3 element $g$ of $G^X$ has shadow equal to zero, i.e., $H_g^X$ is a classical modular form. There is an element $g$, of prime order $p \neq 5$, of $G^X$ for which the shadow of $H_g^X$ is nonzero if and only if $p = 2$. The prime 2 happens to be the only prime that has the rationality property for $X_0(5)$.

In fact, this pattern persists and we have the following theorem.

\begin{Theorem}\label{theorem3} Let $X$ be an umbral modular curve, and let $p$ be a prime not dividing the level of~$X$. Then the prime $p$ has the rationality property for $X$ if and only if there exists an element $g \in G^X$ such that $n_g = p$ and the shadow of $H^X_g$ is nonzero.
\end{Theorem}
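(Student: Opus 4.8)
The plan is to reduce both sides of the claimed equivalence to explicit finite lists of primes and then verify that these lists agree across all $23$ umbral cases. For the left-hand side, since an umbral modular curve $X$ has genus zero, Theorem~\ref{T1} tells us that $p$ has the rationality property for $X$ if and only if $\operatorname{\textsl{genus}}(X^p) = 0$, where $X^p$ is the appropriate Atkin--Lehner quotient (for $A$-type lambency $X = X_0(N)$ this is $X_0(Np){+}p$, and similarly for the $D$- and $E$-type cases). Computing $\operatorname{\textsl{genus}}(X^p)$ via the usual genus formulas, together with the classification of genus-zero quotients of $X_0(M)$ by Atkin--Lehner involutions in \cite{ferenbaugh1993genus} to guarantee completeness, produces for each umbral $X$ a finite list of primes $p$ with $\operatorname{\textsl{genus}}(X^p)=0$; this is exactly the data tabulated in Appendix~\ref{AA}.

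For the right-hand side, I would invoke the explicit description of the shadow of $H^X_g$ from \cite{cheng2014umbral2}: the shadow $S(H^X_g)$ is a vector-valued unary theta series whose components are the twisted Euler characters of $G^X$ evaluated at $g$ (the characters $\overline{\chi}^A_g$, and in the $D$- and $E$-type cases also $\chi^A_g$, $\chi^D_g$, and so on). Hence $S(H^X_g) = 0$ precisely when every twisted Euler character of $G^X$ vanishes at $g$. Since $n_g$ depends only on the image of $g$ in $\overline{G}^X$, and the twisted Euler characters are (signed) permutation characters attached to the action of $\overline{G}^X$ on the simple components of the Niemeier root system, whether $S(H^X_g)$ vanishes is governed by the cycle shape of $g$ on those components and can be read directly from the character tables of the umbral groups. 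In this way one extracts, for each umbral $X$, the finite list of primes $p$ not dividing the level of $X$ for which $G^X$ has a class $[g]$ with $n_g = p$ and nonvanishing shadow.

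It then remains to compare the two lists. A convenient shortcut is that many of the $23$ umbral modular curves are also monstrous modular curves, so for those Theorem~\ref{T2} already matches the rationality primes with orders of Fricke elements in monster centralizers, and one only has to line this up with the umbral-side list; the few non-monstrous umbral curves, and the $D$- and $E$-type cases where the shadow has several components, must be treated by direct inspection. I expect the main obstacle to be precisely this case-by-case bookkeeping: there does not appear to be a single uniform mechanism forcing the genus condition $\operatorname{\textsl{genus}}(X^p)=0$ and the existence of a strictly mock $H^X_g$ with $n_g=p$ to coincide, so the proof ultimately comes down to assembling the twisted Euler character data for all $23$ umbral groups and cross-checking it, class by class, against the genus computations of Appendix~\ref{AA} (themselves resting on Theorem~\ref{T1} and \cite{ferenbaugh1993genus}).
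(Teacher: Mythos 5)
Your proposal is correct and follows essentially the same route as the paper: the paper's proof likewise reduces the right-hand side to the vanishing of the twisted Euler characters tabulated in \cite{cheng2014umbral2}, reduces the left-hand side to the rationality primes obtained via Theorem~\ref{T1} (Appendix~\ref{AA}), and concludes by case-by-case inspection across the 23 umbral cases. Your additional remarks (the genus-zero criterion for $X^p$, completeness via \cite{ferenbaugh1993genus}, and the shortcut through Theorem~\ref{T2} for monstrous lambencies) only elaborate on the same finite verification.
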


\begin{proof}From the tables of values of the twisted Euler characters in \cite{cheng2014umbral2}, one can enumerate the primes $p$ not dividing $N$ with the following properties: (1) there is a $g\in G^X$ with $n_g = p$; and (2) there is a twisted Euler character that does not vanish at $g$, or equivalently, the shadow of~$H_g^X$ is nonzero. By inspection, the primes that satisfy these properties are the same primes that have the rationality property for $X$.
\end{proof}

\begin{Remark}
We considered only the primes that do not divide the level in the formulation of our notion of rationality. It would be interesting to extend this notion so as to include all primes. Can this be done in such a way that Theorems \ref{T2} and \ref{T3} also generalize?
\end{Remark}

\begin{Remark}
As explained in a recent work of Cheng and Duncan, naturally attached to any lambency $X$~-- i.e., a genus zero quotient of $X_0(N)$ by a set of Atkin--Lehner involutions that does not include the Fricke involution~-- is an ``optimal mock Jacobi form'' $\phi^X$ of level 1 with integer coefficients \cite{cheng2016optimal}. These optimal mock Jacobi forms allow the recovery of the graded trace functions in umbral moonshine; if $X$ is an umbral lambency, then the components of $H^X_e$ are the coefficients in the theta decomposition of $\phi^X$, and the components of the other graded trace functions $H_g^X$ may be obtained, via certain multiplicative relations (cf.\ Tables~8 and~9 of~\cite{cheng2014umbral2}), from $\phi^{X'}$ where $X'$ is of lower lambency. There are extra lambencies in \cite{cheng2016optimal} that do not occur in umbral moonshine. A natural guess is that there is a generalization of umbral moonshine that incorporates these more general lambencies, and Theorem~\ref{T3} could serve as a consistency check for this generalization.
\end{Remark}

\section{Generalized supersingular polynomials}\label{S4}

In this section we prove Theorem \ref{T4} mentioned in the introduction. For an even positive integer~$k$, we denote by: $E_k$ the normalized Eisenstein series of weight~$k$; $G_k$ the coefficient of $X^k$ in $\big(1-3E_4(\tau)X^4 + 2E_6(\tau)X^6\big)^{-1/2}$; and $H_k$ the coefficient of $X^k$ in $\big(1-3E_4(\tau)X^4 + 2E_6(\tau)X^6\big)^{k/2}$.

\subsection[The polynomial $f_p^{(N)}$]{The polynomial $\boldsymbol{f_p^{(N)}}$}\label{S4.1}

Suppose that the genus of $X_0(N)$ is zero, and let $T_N$ be the principal modulus for $X_0(N)$. There is a normalized modular form $\Delta_N \in M_{12}(\Gamma_0(N))$, whose formula is given in Appendix~\ref{AB}, that vanishes only at the infinite cusp of $\Gamma_0(N)$ and nowhere else. Since $\Delta$ is a modular form of weight~12 and level~1 that vanishes only at the infinite cusp and nowhere else, the modular form~$\Delta_N$ may be considered as higher level analogues of $\Delta$, and hence the choice for its notation.

Let $p$ be prime. Note that we can uniquely write $p-1$ in the form
\begin{gather*} p-1 = 12m + 4\delta + 6\epsilon,\qquad \text{where}\quad m \geq \mathbb{Z}_{\geq 0},\quad \delta, \epsilon \in \{0,1\}. \end{gather*} Using the classical valence formula, if $f \in M_{p-1}(\Gamma_0(1))$, then $f/\big(E_4^\delta E_6^\epsilon\big) \in M_{12m}(\Gamma_0(1))$. We get a~modular function on $\Gamma_0(N)$ by dividing $f/\big(E_4^\delta E_6^\epsilon\big)$ by~$\Delta_N^m$. Moreover, since $\Delta_N$ vanishes only at $\infty$, the poles of $f/\big(E_4^\delta E_6^\epsilon \Delta_N^{m}\big)$ are supported at $\infty$. Thus there exists a polynomial $f_p^{(N)} \in \mathbb{C}[x]$ such that
\begin{gather*} \dfrac{f}{E_4^\delta E_6^\epsilon \Delta_N^m} = f_p^{(N)}(T_N). \end{gather*}
Note that if $f$ has integral Fourier coefficients, which is true for the modular forms $E_{p-1}$, $G_{p-1}$, $H_{p-1}$, then $f_p^{(N)}$ has rational coefficients.

\subsection[The polynomial $g_p^{(N)}$]{The polynomial $\boldsymbol{g_p^{(N)}}$}\label{S4.2}

Later in proving Theorem \ref{T4}, we will be using the following result~-- which is the $N=1$ case of Theorem~\ref{T4}~-- due to Deuring, Hasse, Deligne, Kaneko and Zagier~\cite{kaneko1998supersingular}.

\begin{Proposition}\label{P1}Let $p \geq 5$ be a prime and let $f$ be any of $E_{p-1}$, $G_{p-1}$ or $H_{p-1}$. Let $f_p^{(1)}$ be the polynomial defined in Section~{\rm \ref{S4.1}}. Then
\begin{gather*} ss_p^{(1)}(x) \equiv \pm f_p^{(1)}(x)x^\delta(x-1728)^\epsilon \pmod{p}. \end{gather*}
\end{Proposition}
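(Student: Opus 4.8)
The plan is to reduce the statement, as in \cite{kaneko1998supersingular} (following Serre and Katz in the Eisenstein case), to two classical inputs: the mod-$p$ reduction of $f$ is $\pm$ the Hasse invariant, and the divisor of the Hasse invariant is the reduced supersingular divisor. The first thing I would record is that $E_{p-1}\equiv 1\pmod p$, the Clausen--von Staudt congruence, so that the reduction of $E_{p-1}$ modulo $p$ is the unique mod-$p$ modular form of weight $p-1$ with $q$-expansion $1$, namely the Hasse invariant $A$; and that $G_{p-1}\equiv H_{p-1}\equiv \pm E_{p-1}\pmod p$ with a sign depending only on $p$ (checked in \cite{kaneko1998supersingular}, and visible already from the binomial expansions of $(1-3E_4X^4+2E_6X^6)^{\pm 1/2}$ and $(1-3E_4X^4+2E_6X^6)^{(p-1)/2}$), so that the reduction of $f$ modulo $p$ is $\pm A$ in all three cases. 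This $\pm$ is precisely the ambiguous sign in the statement, so it suffices to treat an $f$ whose reduction is $A$ exactly and to track one leading coefficient at the end.

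Next I would convert the construction of $f_p^{(1)}$ into a divisor statement on $X_0(1)$ over $\overline{\mathbb F}_p$. Since $\{E_4^\delta E_6^\epsilon \Delta^i E_4^{3(m-i)} : 0\le i\le m\}$ is an $\mathbb F_p$-basis of $M_{p-1}$ --- it has $m+1$ elements with $q$-expansions $q^i(1+O(q))$, and $\dim M_{p-1}=m+1$ --- there is a polynomial $P\in\mathbb F_p[x]$ with $A = E_4^\delta E_6^\epsilon \Delta^m P(j)$, and $\deg P = m$ because $A$ has $q$-expansion $1$ and hence does not vanish at the cusp, forcing $\operatorname{ord}_\infty(A)=m-\deg P=0$. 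Away from $j=0,1728,\infty$ the forms $E_4,E_6,\Delta$ are invertible and $j$ is a uniformizer, so there the zero locus of $A$ is the zero locus of $P$; since $A$ vanishes exactly at the supersingular points, the roots of $P$ in $\overline{\mathbb F}_p\setminus\{0,1728\}$ are precisely the supersingular $j$-invariants other than $0$ and $1728$. Finally, $p-1=12m+4\delta+6\epsilon$ gives $\delta=1\iff p\equiv 2\pmod 3\iff j=0$ supersingular and $\epsilon=1\iff p\equiv 3\pmod 4\iff j=1728$ supersingular, so the factors $x^\delta(x-1728)^\epsilon$ account exactly for the supersingular values among $\{0,1728\}$, and $P(x)\,x^\delta(x-1728)^\epsilon$ vanishes precisely at the full set of supersingular $j$-invariants.

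To turn ``same roots'' into the asserted identity I would count. By the Eichler--Deuring mass formula $\sum_E 1/|\operatorname{Aut}E|=(p-1)/24$, the sum being over isomorphism classes of supersingular $E$ over $\overline{\mathbb F}_p$, together with $|\operatorname{Aut}E|=6,4,2$ respectively at $j=0$, $j=1728$, and elsewhere (using $p\ge 5$), there are exactly $m$ supersingular $j$-invariants different from $0$ and $1728$. Hence $P$, a degree-$m$ polynomial vanishing at each of these $m$ values, is a nonzero constant times their product, so $P(x)\,x^\delta(x-1728)^\epsilon$ is that same constant times $\prod_E(x-j(E))=ss_p^{(1)}(x)$. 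Matching leading coefficients --- equivalently the leading $q$-behaviour, via $j\sim q^{-1}$ and $f$ having $q$-expansion $\equiv\pm 1\pmod p$ --- identifies the constant as $\pm 1$, which is the claim.

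The step I expect to be the genuine obstacle, everything else being bookkeeping, is the input used above that the divisor of the Hasse invariant is the \emph{reduced} supersingular divisor: that $A$ vanishes at every supersingular point, and --- what is in any case forced once the mass-formula count is available --- to order exactly one there. I would take this from Deuring's analysis of supersingular curves (via the Deuring normal form) or from Igusa; the cleanest formulation is that the section $A$ of $\omega^{\otimes(p-1)}$ on the modular stack $\overline{\mathcal M}_{1,1}$ over $\mathbb F_p$ has divisor $\sum_{E\text{ ss}}[E]$, and the descent to the coarse $j$-line is exactly what produces the stacky correction factors $x^\delta$ and $(x-1728)^\epsilon$. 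A minor secondary point is the sign bookkeeping in $G_{p-1}\equiv H_{p-1}\equiv\pm E_{p-1}\pmod p$, for which I would simply cite \cite{kaneko1998supersingular}.
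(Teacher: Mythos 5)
The paper does not actually prove Proposition~\ref{P1}: it is stated as a known result, attributed to Deuring, Hasse, Deligne, Kaneko and Zagier, with \cite{kaneko1998supersingular} as the reference. Your argument is correct and is essentially the proof found in that reference: $E_{p-1}\bmod p$ equals the Hasse invariant $A$ by von Staudt--Clausen and the $q$-expansion principle; the divisor of $A$ is the reduced supersingular divisor (Deuring, Igusa); the Eichler--Deuring mass formula shows there are exactly $m=\deg f_p^{(1)}$ supersingular $j$-invariants outside $\{0,1728\}$, while the congruence conditions on $\delta$ and $\epsilon$ match the CM criteria for supersingularity at $j=0$ and $j=1728$; and the $G_{p-1}$, $H_{p-1}$ cases reduce to $E_{p-1}$ via the sign congruences, which you appropriately cite. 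The only caveat is that the two inputs you flag (the divisor of the Hasse invariant, and $G_{p-1}\equiv H_{p-1}\equiv\pm E_{p-1}\pmod p$) carry most of the content, so your write-up is a correct reduction to standard facts rather than a from-scratch proof---which is exactly the level at which the cited source operates.
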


We denote by $g_p^{(N)}$ the higher level analogues of the factor $x^\delta$ and $(x-1728)^{\epsilon}$ found in this proposition which we obtain as follows. In this proposition, the factor $x$ corresponds to the (isomorphism class of the) elliptic curve $y^2 = x^3 +1$ with $j$-invariant equal to~0. The exponent~$\delta$ is equal to 1 when $p \equiv 2 \pmod 3$. The proposition says that these are precisely the primes~$p$ for which the elliptic curve $y^2 = x^3 +1$ is supersingular in characteristic $p$. Similarly, the factor $x-1728$ corresponds to the (class of) curve $y^2 = x^3 + x$ with $j$-invariant 1728, and the proposition says that this elliptic curve is supersingular when $\epsilon = 1$ or when $p\equiv 3 \pmod 4$. 	

These distinguished isomorphism classes of elliptic curves with $j$-invariants 0 and 1728~-- or equivalently points on the moduli space $X_0(1)$ with $j$-values 0 or 1728~-- break up into several isomorphism classes when we consider them as elliptic curves with level structure $N$, or as points on the moduli space $X_0(N)$. The $T_N$-values of these points constitute the roots of the polynomial analogue of $x^\delta(x-1728)^\epsilon$ that we seek.

To obtain the $T_N$-values given a $j$-value, we need a \emph{modular relation} between $j$ and $T_N$, by which we mean a relation $j(\tau) = r_N(T_N(\tau))$ for some rational function $r_N \in \mathbb{Q}(x)$. In Appendix~\ref{AE}, we present the complete list of modular relations for $j$ and $T_N$ for the $N$'s such that~$X_0(N)$ has genus zero. One can verify these identities by checking that the Fourier coefficients for the left and the right hand side coincide up to the Sturm bound.

To find the $T_N$-values of the points with $j$-invariants 0 and 1728, we need only to solve the equations $0 = r_N(T_N)$ and $1728 = r_N(T_N)$. For example, for $N = 2$, we have $r_2(T) = (T+256)^3/T^2$ and so:
\begin{gather*} r_2(T) = 0 \ \Rightarrow \ T+256 = 0, \qquad
 r_2(T) = 1728 \ \Rightarrow \ (T-512)(T+64) = 0. \end{gather*}
Therefore the level 2 analogue of $x^\delta(x-1728)^\epsilon$ is the polynomial $(x+256)^\delta(x-512)^\epsilon(x+64)^\epsilon$. One can do the same for the other levels to obtain all the polynomials $g_p^{(N)}$.

\subsection{Proof of Theorem~\ref{T4}}\label{S4.3}

Note that the supersingular points of $X_0(N)$ modulo $p$ are the points lying above the supersingular points of $X_0(1)$ modulo $p$. Therefore, Proposition \ref{P1} tells us that the roots of the equation $f_p^{(1)}(j)j^\delta(j-1728)^\epsilon = 0$ are the $j$-values of the supersingular points of $X_0(N)$ modulo $p$. From the definition of $f^{(N)}_p$, we have the relation
\begin{equation}
f^{(N)}_p(T_N)\Delta^m_N = f^{(1)}_p(j)\Delta^m.
\end{equation}
Therefore $f_p^{(1)}(j) = 0$ if and only if $f_p^{(N)}(T_N) = 0$. Also by definition of $g_p^{(N)}$, the equation $j^\delta(j-1728)^\epsilon = 0$ if and only if $g_p^{(N)}(T_N) = 0$. Therefore, the roots of $g_p^{(N)}(T_N)f_p^{(N)}(T_N) = 0$ are the $T_N$-values of the supersingular points of $X_0(N)$ modulo $p$. Finally, the coefficient of $g_p^{(N)}(T_N)f_p^{(N)}(T_N)$ is $\pm 1$ because: $g_p^{(N)}$ is monic; and by (1) the leading term of $f_p^{(N)}$ is the same as the leading term of $f_p^{(1)}$, which is $\pm 1$ by Proposition \ref{P1}. \hfill $\blacksquare$

\appendix

\section{Primes that have the rationality property}\label{AA}

In the following table, we use the notation $N{+}e,f,\ldots$ for the modular curve $X_0(N){+}e,f,\ldots$. Moreover, we use the notation $N{+}$ when all the exact divisors of $N$ are included, and the notation $N{-}$ when no exact divisors are included. The following table lists the primes that have the rationality property for genus zero modular curves of the form $N{+}e,f,\ldots$. There are no primes that have the rationality property for the genus zero modular curves of the form $N{+}e,f,\ldots$ not listed in this table.

\begin{center}
\begin{tabular}{|l|c|c|}
\hline
\ \ $X$ &\multicolumn{1}{|p{50mm}|}{\ \ \ \ $\begin{array}{c} \text{primes that have} \\[-2pt] \text{the rat.\ prop.\ for $X$}\end{array}$} \\
\hline \hline
$2{+}$ & 3, 5, 7, 11, 13, 17, 19, 23, 31, 47 \\
\hline
$2-$ & 3, 5, 7, 11, 23 \\
\hline
$3{+}$ & 2, 5, 7, 11, 13, 17, 23, 29 \\
\hline
$3-$ & 2, 5, 11 \\
\hline
$4{+}$ & 3, 5, 7, 11, 23\\
\hline
$4{-}$ & 3, 7 \\
\hline
$5{+}$ & 2, 3, 7, 11, 19\\
\hline
$5{-}$ & 2\\
\hline
$6{+}$ & 5, 7, 11, 13\\
\hline
$6{+}6$ & 5, 11\\
\hline
$6{+}3$ & 5\\
\hline
$7{+}$ & 2, 3, 5, 17\\
\hline
$7{-}$ & 3\\
\hline
$8{+}$ & 3, 7\\
\hline
$9{+}$ & 2, 5\\
\hline
$9{-}$ & 2\\
\hline
$10{+}$ & 3, 7, 11\\
\hline
$10{+}5$ & 3\\
\hline
$11{+}$ & 2, 3, 5\\
\hline
$12{+}$ & 5\\
\hline
\end{tabular}
\qquad
\begin{tabular}{|l|c|c|}
\hline
\ \ $X$ & \multicolumn{1}{|p{50mm}|}{\ \ \ \ $\begin{array}{c} \text{primes that have} \\[-2pt] \text{the rat.\ prop.\ for $X$}\end{array}$} \\
\hline \hline
$13{+}$ & 2, 3\\
\hline
$14{+}$ & 3, 5\\
\hline
$14{+}14$ & 3\\
\hline
$15{+}$ & 2, 7\\
\hline
$15{+}15$ & 2\\
\hline
$17{+}$ & 2, 3, 7\\
\hline
$19{+}$ & 2, 5\\
\hline
$20{+}$ & 3\\
\hline
$21{+}$ & 2, 5\\
\hline
$22{+}$ & 3, 5\\
\hline
$23{+}$ & 2, 3\\
\hline
$25{+}$ & 2\\
\hline
$26{+}$ & 3\\
\hline
$27{+}$ & 2\\
\hline
$29{+}$ & 3\\
\hline
$31{+}$ & 2\\
\hline
$33{+}$ & 2\\
\hline
$35{+}$ & 2, 3\\
\hline
$47{+}$ & 2\\
\hline
$55{+}$ & 2\\
\hline
\end{tabular}
\end{center}

\vspace{-5.5mm}

\section[Principal moduli for $X_0(N)$]{Principal moduli for $\boldsymbol{X_0(N)}$}\label{AB}

Let $\eta(\tau)$ be the Dedekind eta function. In this table, we employ the following notation for an eta-product:
$n_1^{d_1}\cdots n_l^{d_l} := \eta(n_1\tau)^{d_1}\cdots \eta(n_l\tau)^{d_l}$.
The following table shows the principal moduli for the modular curves $X_0(N)$.
\vspace{-2mm}

\begin{center}
\begin{tabular}{|c|c|}
\hline
$N$ & $T_N$\\
\hline \hline
2 & $1^{24}/2^{24}$\\
\hline
3 & $1^{12}/3^{12}$ \\
\hline
4 & $1^8/4^8$\\
\hline
5 & $1^6/5^6$\\
\hline
6 & $2^8 3^4/1^4 6^8$\\
\hline
7 & $1^4/7^4$\\
\hline
8 & $1^44^2/2^28^4$ \\
\hline
\end{tabular}
\qquad
\begin{tabular}{|c|c|}
\hline
$N$ & $T_N$\\
\hline \hline
9 & $1^3/9^3$\\
\hline
10 & $2^45^2/1^210^4$\\
\hline
12 & $3^34^1/1^112^3$\\
\hline
13 & $1^2/13^2$\\
\hline
16 & $1^28^1/2^116^2$\\
\hline
18 & $2^29^1/1^118^2$\\
\hline
25 & $1/25$\\
\hline
\end{tabular}
\end{center}

\vspace{-4mm}

\section[Higher level analogues of $\Delta$]{Higher level analogues of $\boldsymbol{\Delta}$}\label{AC}

We again use the following notation for an eta-product:
$n_1^{d_1}\cdots n_l^{d_l} := \eta(n_1\tau)^{d_1}\cdots \eta(n_l\tau)^{d_l}$.
The following table lists the modular (non-cuspidal) forms $\Delta^{(N)} \in M_{12}(\Gamma_0(N))$ that vanish only at the cusp $\infty$ of $\Gamma_0(N)$ and nowhere else.

\vspace{-2mm}

\begin{center}
\begin{tabular}{|c|c|	}
\hline
$N$ & $\Delta^{(N)}$\\
\hline \hline
2 & $2^{48}/1^{24}$ \\
\hline
3 & $3^{36}/1^{12}$ \\
\hline
4 & $4^{48}/2^{24}$ \\
\hline
5 & $5^{30}/1^6$ \\
\hline
6 & $1^{12}6^{72}/2^{24}3^{36}$ \\
\hline
7 & $7^{28}/1^4$ \\
\hline
8 & $8^{48}/4^{24}$ \\
\hline
\end{tabular}
\qquad
\begin{tabular}{|c|c|c|	}
\hline
$N$ & $\Delta^{(N)}$\\
\hline \hline
9 & $9^{36}/3^{12}$\\
\hline
10 & $1^{6} 10^{60}/ 2^{12} 5^{30}$\\
\hline
12 & $2^{12} 12^{72}/ 4^{24} 6^{36}$\\
\hline
13 & $13^{26}/1^2$ \\
\hline
16 & $16^{48} / 8^{24}$ \\
\hline
18 & $3^{12} 18^{72}/6^{24} 9^{36}$ \\
\hline
25 & $25^{30}/5^6$ \\
\hline
\end{tabular}
\end{center}

\begin{landscape}

\section[The polynomials $g_p^{(N)}$]{The polynomials $\boldsymbol{g_p^{(N)}}$}\label{AD}

In the following table, we list the polynomials $g_p^{(N)}$ whose roots are the $T_N$-invariants of the characteristic $p$ supersingular elliptic curves with level $N$ structure, such that the $j$-invariant is 0 or 1728.

\begin{center}
\begin{tabular}{|c|c|}
\hline
$N$ & $g_p^{(N)}$ where $p-1 = 12m + 4\delta + 6\epsilon$ \tsep{3pt}\bsep{1pt}\\
\hline \hline
2 & $(x+256)^\delta(x-512)^\epsilon(x+64)^\epsilon$\\
\hline
3 & $(x+27)^\delta(x+243)^\delta(x^2-486x-19683)^\epsilon$ \\
\hline
4 & $(x^2 + 256x + 4096)^\delta(x + 32)^\epsilon (x^2 - 512x - 8192)^\epsilon$\\
\hline
5 & $(x^2 + 250x + 3125)^\delta(x^2 - 500x - 15625)^\epsilon(x^2+22x+125)^\epsilon$\\
\hline
6 & $(x + 3)^\delta(x^3 + 225x^2 - 405x + 243)^\delta(x^2 + 18x - 27)^\epsilon(x^4 - 540x^3 + 270x^2 - 972x + 729)^\epsilon$\\
\hline
7 & $(x^2+13x+49)^\delta(x^2 + 245x + 2401)^\delta(x^4 - 490x^3 - 21609x^2 - 235298x - 823543)^\epsilon$\\
\hline
8 & $(x^4 + 256x^3 + 5120x^2 + 32768x + 65536)^\delta(x^2 + 32x + 128)^\epsilon(x^4 - 512x^3 - 10240x^2 - 65536x - 131072)^\epsilon$ \\
\hline
9 & $(x + 9)^\delta(x^3 + 243x^2 + 2187x + 6561)^\delta(x^6 - 486x^5 - 24057x^4 - 367416x^3 - 2657205x^2 - 9565938x - 14348907)^\epsilon$ \\
\hline
10& \shortstack{$(x^6 + 230x^5 + 275x^4 - 1500x^3 + 4375x^2 - 6250x + 3125)^\delta(x^2 + 2x + 5)^\epsilon(x^2 + 20x - 25)^\epsilon$\\$(x^4 - 540x^3 + 1350x^2 - 1500x + 625)^\epsilon(x^2-2x+5)^\epsilon$}\\
\hline
12 & \shortstack{$(x^2 + 4x - 8)^\delta(x^6 + 228x^5 - 408x^4 - 128x^3 - 192x^2 + 768x - 512)^\delta$\\$(x^4 + 20x^3 - 48x^2 + 32x - 32)^\epsilon(x^8 - 536x^7 - 272x^6 + 3328x^5 + 6400x^4 - 20480x^3 + 4096x^2 + 16384x - 8192)^\epsilon$}\\
\hline
13 & \shortstack{$(x^2+5x+13)^\delta(x^4 + 247x^3 + 3380x^2 + 15379x + 28561)^\delta$\\$(x^6 - 494x^5 - 20618x^4 - 237276x^3 - 1313806x^2 - 3712930x - 4826809)^\epsilon(x^2+6x+13)^\epsilon$}\\
\hline
16 & \shortstack{$(x^8 + 256x^7 + 5632x^6 + 53248x^5 + 282624x^4 + 917504x^3 + 1835008x^2 + 2097152x + 1048576)^\delta(x^4 + 32x^3 + 192x^2 + 512x + 512)^\epsilon$\\$(x^8 - 512x^7 - 11264x^6 - 106496x^5 - 565248x^4 - 1835008x^3 - 3670016x^2 - 4194304x - 2097152)^\epsilon$} \\
\hline
18 & \shortstack{$ (x^3 + 3x^2 - 9x + 9)^\delta(x^9 + 225x^8 - 1080x^7 + 3348x^6 - 8262x^5 + 16038x^4 - 23328x^3 + 26244x^2 - 19683x + 6561)^\delta$\\$(x^6 + 18x^5 - 81x^4 + 216x^3 - 405x^2 + 486x - 243)^\epsilon$\\$(x^{12} - 540x^{11} + 1890x^{10} - 4212x^9 + 13527x^8 - 48600x^7 + 129276x^6 - 262440x^5 $\\ $+ 413343x^4 - 498636x^3 + 433026x^2 - 236196x + 59049)^\epsilon$}\\
\hline
25 & \shortstack{$(x^{10} + 250x^9 + 4375x^8 + 35000x^7 + 178125x^6 + 631250x^5 + 1640625x^4 + 3125000x^3 + 4296875x^2 + 3906250x + 1953125)^\delta$\\$(x^4 + 10x^3 + 45x^2 + 100x + 125)^\epsilon(x^2+2x+5)^\epsilon$\\$(x^{10} - 500x^9 - 18125x^8 - 163750x^7 - 871875x^6 - 3137500x^5 - 8203125x^4 - 15625000x^3 - 21484375x^2 - 19531250x - 9765625)^\epsilon$}\\
\hline
\end{tabular}
\end{center}
\vspace*{\fill}

\end{landscape}

\newpage

\begin{landscape}

\section{Modular relations}\label{AE}

The following table gives $j$ as rational functions of $T_N$.

\begin{center}
\begin{tabular}{|c|c|c|}
\hline
$N$ & modular relations between $j$ and $T = T_N$\\
\hline \hline
2 & $j = \dfrac{(T+256)^3}{T^2}$ \\
\hline
3 & $j = \dfrac{(T+27)(T+243)^3}{T^3}$ \\
\hline
4 & $j = \dfrac{(T^2+256T+4096)^3}{(T+16)T^4}$ \\
\hline
5 & $j = \dfrac{(T^2+250T+3125)^3}{T^5}$ \\
\hline
6 & $j = \dfrac{(T + 3)^3(T^3 + 225T^2 - 405T + 243)^3}{(T-1)^2(T-9)^6T^3}$ \\
\hline
7 & $j = \dfrac{(T^2 + 13T + 49)(T^2 + 245T + 2401)^3}{T^7}$\\
\hline
8 & $j = \dfrac{(T^4 + 256T^3 + 5120T^2 + 32768T + 65536)^3}{(T+4)(T+8)^2T^8}$ \\
\hline
9 & $j = \dfrac{(T+9)^3(T^3 + 243T^2 + 2187T + 6561)^3}{(T^2+9T+27)T^9}$\\
\hline
10 & $j = \dfrac{(T^6 + 230T^5 + 275T^4 - 1500T^3 + 4375T^2 - 6250T + 3125)^3}{(T - 1)^2(T - 5)^{10}T^5}$ \\
\hline
12 & $j = \dfrac{(T^2 + 4T - 8)^3(T^6 + 228T^5 - 408T^4 - 128T^3 - 192T^2 + 768T - 512)^3}{(T - 2)(T - 1)^3(T + 2)^3(T - 4)^{12}T^4}$ \\
\hline
13 & $\dfrac{(T^2 + 5T + 13)(T^4 + 247T^3 + 3380T^2 + 15379T + 28561)^3}{T^{13}}$ \\
\hline
16 & $j = \dfrac{(T^8 + 256T^7 + 5632T^6 + 53248T^5 + 282624T^4 + 917504T^3 + 1835008T^2 + 2097152T + 1048576)^3}{(T + 2)(T + 4)^4(T^2+4T+8)T^{16}}$ \\
\hline
18 & $j = \dfrac{(T^3 + 3T^2 - 9T + 9)^3(T^9 + 225T^8 - 1080T^7 + 3348T^6 - 8262T^5 + 16038T^4 - 23328T^3 + 26244T^2 - 19683T + 6561)^3}{(T - 1)^2T^9(T - 3)^{18}(T^2 - 3T + 3)(T^2 + 3)^2}$ \\
\hline
25 & $j = \dfrac{(T^{10} + 250T^9 + 4375T^8 + 35000T^7 + 178125T^6 + 631250T^5 + 1640625T^4 + 3125000T^3 + 4296875T^2 + 3906250T + 1953125)^3}{(T^4 + 5T^3 + 15T^2 + 25T + 25)T^{25}}$\\
\hline
\end{tabular}
\end{center}

\end{landscape}

\subsection*{Acknowledgments}

We are immensely indebted to John Duncan for his constant advice and encouragement throughout the project, without which this work would not have been possible. We are also grateful to Luca Candelori, Scott Carnahan, Simon Norton, Preston Wake, Robert Wilson and David Zureick-Brown for taking time to answer our questions, and to Ken Ono for posing the original problem which motivated this research. We thank the referees for the helpful suggestions and comments. We wrote part of this paper during our stay at the Erwin Schr\"{o}dinger International Institute for Mathematics and Physics (ESI) from 10 to 14 September 2018; we are thankful to ESI for their support.

\pdfbookmark[1]{References}{ref}
\LastPageEnding

\end{document}